\newtheorem{thm}{Theorem}[section]
\newtheorem{lem}[thm]{Lemma}
\newtheorem{prop}[thm]{Proposition}
\newtheorem{conj}[thm]{Conjecture}
\newtheorem{identity}[thm]{Identity}
\theoremstyle{definition}
\newtheorem{rem}[thm]{Remark}
\newtheorem{defn}[thm]{Definition}
\newtheorem{example}[thm]{Example}
\numberwithin{equation}{section}
\crefname{thm}{theorem}{theorems}
\crefname{rem}{remark}{remarks}
\crefname{prop}{proposition}{propositions}
\crefname{lem}{lemma}{lemmas}
\crefname{identity}{identity}{identities}
\crefname{equation}{}{}
\DeclareMathOperator{\GL}{GL}
\DeclareMathOperator{\SL}{SL}
\DeclareMathOperator{\SU}{SU}
\DeclareMathOperator{\UU}{U}
\DeclareMathOperator{\OO}{O}
\DeclareMathOperator{\GG}{G}
\DeclareMathOperator{\Sp}{Sp}
\DeclareMathOperator{\SpU}{SpU}
\DeclareMathOperator{\diag}{diag}
\DeclareMathOperator{\Sym}{Sym}
\DeclareMathOperator{\Pf}{Pf}
\DeclareMathOperator{\chr}{char}
\DeclareMathOperator{\adj}{adj}
\newcommand{\md}[1]{{\left\lvert #1 \right\lvert}}
\newcommand{\deff}[1]{{\color{Cerulean}#1}}
\newcommand{\into}{\longhookrightarrow}
\DeclareRobustCommand{\onto}{\relbar\joinrel\twoheadrightarrow}
\newcommand{\tr}{\operatorname{tr}}
\newcommand{\dG}{{\mathrm{d}}G}
\newcommand{\Sage}{\texttt{SageMath}}
\let\subset\subseteq
\let\ge\geqslant
\let\le\leqslant
\let\mapsto\longmapsto
\let\to\longrightarrow
\begin{document}

\title[Splitting the difference]{Splitting the difference: Computations of the Reynolds operator \\ in classical invariant theory}

\author{Aryaman Maithani}
\address{Department of Mathematics, University of Utah, 155 South 1400 East, Salt Lake City, UT~84112, USA}
\email{maithani@math.utah.edu}

\thanks{The author was supported by NSF grants DMS 2101671 and DMS 2349623.}

\subjclass[2020]{Primary 13A50; Secondary 13P99, 14L24, 14L35.}

\keywords{Reynolds operator, ring of invariants, classical groups, linearly reductive groups.}

\begin{abstract}
	If $G$ is a linearly reductive group acting rationally on a polynomial ring $S$, 
	then the inclusion $S^{G} \into S$ possesses a unique $G$-equivariant splitting, called the Reynolds operator.
	We describe algorithms for computing the Reynolds operator for the \emph{classical actions} as in Weyl's book. 
	The groups are the general linear group, the special linear group, the orthogonal group, and the symplectic group, 
	with their classical representations: direct sums of copies of the standard representation and copies of the dual representation.
\end{abstract}

\maketitle

{\setlength{\parskip}{0em}
\tableofcontents}

\section{Introduction} \label{sec:introduction}

	Consider a group $G$ acting on a ring $S$ by ring automorphisms. 
	The \deff{ring of invariants} for this group action is defined as
	\begin{equation*} 
		S^{G} \coloneqq \{s \in S : g(s) = g \ \text{for all} \ g \in G\},
	\end{equation*}
	that is, $S^{G}$ is the subring of elements that are fixed by each group element. 
	We have the inclusion of rings
	\begin{equation} \label{eq:inclusion}
		S^{G} \into S.
	\end{equation}
	The above is also then an inclusion of $S^{G}$-modules. 
	A natural question to ask is whether~\Cref{eq:inclusion} splits in the category of $S^{G}$-modules --- in which case $S^{G}$ is a direct summand of $S$.
	A positive answer to this question often implies good properties about the subring; for example, a direct summand of a noetherian ring is again noetherian. 
	A deeper result is the Hochster--Roberts theorem~\Cite{HochsterRoberts}, which states that a direct summand of a polynomial ring is Cohen--Macaulay. 
	The inclusion~\Cref{eq:inclusion} does not always split; 
	a simple example is the alternating group $A_{3}$ acting on $\mathbb{F}_{3}[x, y, z]$ by permuting the variables, see \Cref{ex:A3-not-split}. 
	A more dramatic example was given by \Citeauthor{Nagarajan}~\Cite{Nagarajan}, where a group of order two acts on a regular ring for which the ring of invariants is not noetherian. 
	For finite groups, a simple condition that ensures the existence of a splitting is having order invertible in $S$; the inclusion~\Cref{eq:inclusion} then splits with an $S^{G}$-linear splitting given by
	\begin{equation*} 
		s \mapsto \frac{1}{\md{G}} \sum_{g \in S} g(s).
	\end{equation*}
	The above is the \emph{Reynolds operator} and has the additional property of being \emph{$G$-equivariant} (\Cref{defn:splitting}).

	In this paper, our groups of interest are certain linear algebraic groups over a field $k$, that is,
	Zariski-closed subgroups of $\GL_{n}(k)$. 
	If such a group $G$ acts (rationally) on a $k$-vector space $V$, then we get a (rational) degree-preserving $k$-algebra action of $G$ on the polynomial ring $S \coloneqq \Sym(V)$. 
	Hilbert's fourteenth problem asked if $S^{G}$ is always a finitely generated $k$-algebra --- a question answered in the negative by \Citeauthor{Nagata14th}~\Cite{Nagata14th} by giving an example where $S^{G}$ is not noetherian. 
	For linear algebraic groups, the analogue to having invertible order is to be \emph{linearly reductive}. 
	These groups admit a similar Reynolds operator, see \Cref{thm:linearly-reductive-reynolds-unique-linear};
	in particular, the inclusion~\Cref{eq:inclusion} splits $G$-equivariantly and $S^{G}$-linearly. 

	We focus on the following titular \emph{classical groups} of Weyl's book~\Cite{WeylClassical}: 
	the general linear group $\GL_{n}(k)$, 
	the special linear group $\SL_{n}(k)$, 
	the orthogonal group $\OO_{n}(k)$, and
	the symplectic group $\Sp_{2n}(k)$. 
	As in the book, we look at their classical actions, corresponding to the direct sum of copies of the standard representation and possibly copies of the dual representation. 
	We record the rings of invariants for some of these actions in \Cref{thm:classical-invariants}. 
	This includes infinite fields of positive characteristic as in~\Cite{ConciniProcesiCharacteristicFree, Hashimoto:AnotherProof}. 
	There is, however, a stark difference between characteristics zero and positive: 
	if $k$ is a field of characteristic zero, then the groups listed above are all linearly reductive. 
	This is typically not the case in positive characteristic wherein these groups admit representations for which the ring of invariants is not Cohen--Macaulay~\Cite{Kohls:NonCM}. 
	Moreover --- while the classical rings of invariants continue to be Cohen--Macaulay even in positive characteristic --- the inclusion~\Cref{eq:inclusion} is rarely split~\Cite{HochsterJeffriesPandeySingh}. 
	This has the interesting consequence that given any splitting over $\mathbb{Q}$, every prime must appear in the denominator of the image of any basis; see \Cref{rem:primes-in-denominators} for a precise statement. 

	For the most part, we consider these classical groups in characteristic zero. 
	Because these are then linearly reductive, the inclusion~\Cref{eq:inclusion} splits.
	We give an algorithm for explicitly computing the Reynolds operator in each case in terms of certain integrals of monomial functions. 
	We do this by reducing the computation to one over a compact Lie group, in which case we may integrate with respect to the Haar measure akin to averaging over a finite group. 
	Methods to compute these integrals are of interest in mathematical physics due to their important role in areas such as mesoscopic transport, quantum chaos, and quantum information and decoherence. 
	This interest has led to the development of various algorithms --- such as the \emph{invariant method} and the \emph{column vector method} --- to compute these integrals;
	see the introduction of~\Cite{GorinLopez} for more on this topic.

	We remark that there are conditions weaker than having invertible order or being linearly reductive that imply finite generation of $S^{G}$. 
	Indeed, Noether~\Cite{Noether:Invariants} showed that if $G$ is a finite group acting on a finitely generated $k$-algebra $S$ by $k$-algebra automorphisms, then $S^{G}$ is a finitely generated $k$-algebra. 
	Similarly, \Citeauthor{Haboush:Reductive}~\Cite{Haboush:Reductive} proved that if $G$ is a \emph{reductive group} acting rationally on a finitely generated $k$-algebra $S$, then $S^{G}$ is finitely generated. 
	While the classical groups are no longer linearly reductive in positive characteristic, they continue to be reductive, and hence the invariant subrings are known to be finitely generated. 

	The paper is arranged as follows. 
	After setting up the notations and definitions in \Cref{sec:basic-notions}, we define the classical group actions in \Cref{sec:classical-group-actions} and record the rings of invariants. 
	In \Cref{sec:linearly-reductive}, we recall the relevant facts about linearly reductive groups. 
	\Cref{sec:splitting-over-lie-group} discusses the computation of the Reynolds operator for a compact Lie group. We discuss facts about the Haar measure and set up the required machinery to integrate functions that take values in polynomial rings. 
	\Cref{sec:reynolds-classical} begins by describing how the computation of the Reynolds operator for a classical group over an arbitrary field of characteristic zero can be reduced to that for a compact Lie group. 
	With this reduction in place, we then give algorithms that one may implement on a computer algebra system. 
	We make use of these algorithms in \Cref{sec:explicit-formulae} to provide explicit formulae for the Reynolds operators for the $\SL$ and $\GL$ actions. 
	These algorithms have been implemented in \Sage~\Cite{sagemath}, and we note some conjectures arising out of these computations.
	Lastly, we compare with the situation in positive characteristic in \Cref{sec:positive-characteristic}.

\section{Notations and definitions} \label{sec:basic-notions}

	The letter $k$ will denote a field. 
	For $n \ge 1$, $\mathbb{A}_{k}^{n}$ denotes the topological space $k^{n}$ with the Zariski topology. 
	We recall the following classical groups of invertible matrices. 
	\begin{enumerate}[label=(\alph*)]
		\item (General linear group) $\GL_{n}(k)$ is the group of $n \times n$ invertible matrices over $k$.
		\item (Special linear group) $\SL_{n}(k) \coloneqq \{M \in \GL_{n}(k) : \det(M) = 1\}$.
		\item (Orthogonal group) $\OO_{n}(k) \coloneqq \{M \in \GL_{n}(k) : M^{\tr} M = I_{n}\}$, where $I_{n}$ denotes the identity matrix.
		\item (Symplectic group) $\Sp_{2n}(k) \coloneqq \{M \in \GL_{2n}(k) : M^{\tr} \Omega M = \Omega\}$, 
		where $\Omega \coloneqq \left(
		\begin{smallmatrix}
		O & I_{n} \\ 
		-I_{n} & O \\
		\end{smallmatrix}
		\right)$.
	\end{enumerate}

	When the field $k$ is taken to be the complex numbers, we have the following additional subgroups.
	\begin{enumerate}[label=(\alph*), resume]
		\item (Unitary group) $\UU_{n}(\mathbb{C}) \coloneqq \{U \in \GL_{n}(\mathbb{C}) : U U^{\ast} = I_{n}\}$, 
		where $U^{\ast}$ denotes the conjugate transpose of $U$.
		\item (Special unitary group) $\SU_{n}(\mathbb{C}) \coloneqq \UU_{n}(\mathbb{C}) \cap \SL_{n}(\mathbb{C})$. 
		\item (Symplectic unitary group) $\SpU_{2n}(\mathbb{C}) \coloneqq \UU_{2n}(\mathbb{C}) \cap \Sp_{2n}(\mathbb{C})$.
	\end{enumerate}
 
	All the above groups inherit the subspace topology from $\mathbb{A}_{k}^{n^{2}}$, 
	and we refer to this as the Zariski topology.
	These are all topological groups --- though typically not Hausdorff --- because the product and inversion functions are continuous in the Zariski topology,
	being given by rational functions in the entries of the matrices. 

	When $k = \mathbb{C}$, these groups also have the Euclidean topology 
	and moreover are smooth submanifolds of $\mathbb{C}^{n^{2}}$. 
	In this case, the product and inversion functions are smooth; hence, these are all Lie groups. 

	\begin{defn} \label{defn:splitting}
		Let $G$ be a group acting by ring automorphisms on a ring $S$. 
		A \deff{splitting} for the inclusion 
		$S^{G} \into S$ is an additive function $\mathcal{R} \colon S \to S^{G}$ such that 
		$\mathcal{R}(r) = r$ for all $r \in S^{G}$. 
		The splitting is \deff{$G$-equivariant} if $\mathcal{R}(g(s)) = \mathcal{R}(s)$ for all 
		$g \in G$ and $s \in S$. 
		The splitting is \deff{$S^{G}$-linear} if 
		$\mathcal{R}(rs) = r \mathcal{R}(s)$ for all $r \in S^{G}$ and $s \in S$.
	\end{defn}

\section{The classical group actions} \label{sec:classical-group-actions}
	
	Let $k$ be a field, and $t$, $m$, $n$ be positive integers. 
	We use the notation
	\begin{equation*} 
		k[Y_{t \times n}] \coloneqq 
		k[y_{ij} : 1 \le i \le t,\, 1 \le j \le n],
	\end{equation*} 
	that is, $k[Y_{t \times n}]$ is a polynomial ring over $k$ in $tn$ variables. 
	Once the dimensions have been specified, we write $k[Y]$ for brevity.
	We use the letter $Y$ for the $t \times n$ matrix $[y_{ij}]_{i, j}$. 
	The notation naturally extends to $k[X_{m \times t}, Y_{t \times n}]$.

	Let $G$ be one of the groups $\GL_{t}(k)$, $\SL_{t}(k)$, $\OO_{t}(k)$, or $\Sp_{t}(k)$,
	where for the last case, we assume that $t$ is even. 
	We will consider the following two types of rational actions of $G$.
	\begin{enumerate}[label=(R\arabic*)]
		\item \label{item:standard-action} 
		The group $G$ acts on $k[Y_{t \times n}]$, 
		where the action of $M \in G$ is given by
		\begin{equation*} 
			M \colon Y \mapsto M Y;
		\end{equation*}
		by the above, we mean that $[Y]_{ij} \mapsto [MY]_{ij}$.
		\item \label{item:standard-dual-action} 
		The group $G$ acts on $k[X_{m \times t}, Y_{t \times n}]$, 
		where the action of $M \in G$ is given by
		\begin{equation*} 
			M \colon 
			\begin{cases}
				X \mapsto X M^{-1}, \\
				Y \mapsto M Y.
			\end{cases}
		\end{equation*}
	\end{enumerate}
	The first action corresponds to the direct sum of $n$ copies of the standard representation, 
	whereas the second has an additional $m$ copies of the dual representation. 
	We will describe the splittings for all of these actions. 

	We recall below the \emph{classical rings of invariants} as in Weyl's book~\Cite{WeylClassical} where they were originally discussed in characteristic zero. 
	A characteristic-free proof of the following theorem can be found in~\Cite{ConciniProcesiCharacteristicFree, Hashimoto:AnotherProof}.
	\begin{thm} \label{thm:classical-invariants}
		Let $k$ be an infinite field. With the above actions, we have the following rings of invariants.
		\begin{enumerate}[label=(\alph*)]
			\item (General linear group) For positive integers $t$, $m$, $n$, the equality 
			\begin{equation*} 
				k[X_{m \times t}, Y_{t \times n}]^{\GL_{t}(k)}
				=
				k[XY]
			\end{equation*} 
			holds, that is,
			the invariant ring is generated,
			as a $k$-algebra,
			by the entries of the matrix product $XY$.
			\item (Special linear group) For positive integers $t$, $n$ with $t \le n$, the equality 
			\begin{equation*} 
				k[Y_{t \times n}]^{\SL_{t}(k)} = k[\text{size $t$ minors}]
			\end{equation*}
			holds, that is,
			the invariant ring is generated,
			as a $k$-algebra,
			by the size $t$ minors of the matrix $Y$.
			\item (Orthogonal group) For positive integers $t$, $n$ and $\chr(k) \neq 2$, the equality 
			\begin{equation*} 
				k[Y_{t \times n}]^{\OO_{t}(k)} = k[Y^{\tr} Y]
			\end{equation*}
			holds, that is,
			the invariant ring is generated,
			as a $k$-algebra,
			by the entries of the matrix product $Y^{\tr} Y$.
			\item (Symplectic group) For positive integers $t$, $n$, the equality 
			\begin{equation*} 
				k[Y_{2t \times n}]^{\Sp_{2t}(k)} = k[Y^{\tr} \Omega Y]
			\end{equation*}
			holds, that is,
			the invariant ring is generated,
			as a $k$-algebra,
			by the entries of the matrix product $Y^{\tr} \Omega Y$.
		\end{enumerate}
	\end{thm}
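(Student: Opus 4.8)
The plan is to dispose of the easy containment first and then concentrate on the reverse one, which is the content of the \emph{first fundamental theorem} for each group. In every case the displayed generators are manifestly invariant: writing the action of $M \in G$, one computes $X M^{-1} \cdot M Y = XY$, so the entries of $XY$ are $\GL_{t}(k)$-invariant; a maximal minor of $Y$ scales by $\det(M) = 1$ under $\SL_{t}(k)$; and $(MY)^{\tr}(MY) = Y^{\tr}(M^{\tr}M)Y = Y^{\tr}Y$ while $(MY)^{\tr}\Omega(MY) = Y^{\tr}(M^{\tr}\Omega M)Y = Y^{\tr}\Omega Y$ by the defining relations of $\OO_{t}(k)$ and $\Sp_{2t}(k)$. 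This gives the containment $k[\text{generators}] \subseteq S^{G}$ in each part. The work is to show that every invariant lies in this subalgebra.

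My strategy is to treat the general linear case (a) as the prototype and then bootstrap the remaining three from it. Assume first that $\chr(k) = 0$, where $G$ is linearly reductive, so that averaging onto invariants is available. Since the action is degree-preserving and the $m$ rows $x_{1}, \ldots, x_{m}$ of $X$ and the $n$ columns $y_{1}, \ldots, y_{n}$ of $Y$ transform as covectors and vectors respectively, the method of polarization and restitution reduces the problem to identifying the \emph{multilinear} invariants, namely those linear in each $x_{i}$ and each $y_{j}$. Writing $V = k^{t}$, such invariants are the $\GL_{t}(k)$-fixed vectors in $V^{\otimes m} \otimes (V^{\ast})^{\otimes n}$. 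By Schur--Weyl duality — equivalently, the double-centralizer theorem for the commuting actions of $\GL(V)$ and the symmetric group on $V^{\otimes \bullet}$ — these fixed vectors vanish unless $m = n$ and, when $m = n$, are spanned by the total-contraction tensors indexed by $\sigma \in S_{n}$. Each such contraction is exactly the product $\prod_{j} \langle x_{\sigma(j)}, y_{j}\rangle = \prod_{j} (XY)_{\sigma(j),\, j}$, so the multilinear invariants are spanned by products of entries of $XY$; restitution then yields part (a).

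With (a) in hand I would derive the other three by exploiting extra structure of each group. For $\SL_{t}(k)$ (case b), the Cauchy decomposition $\Sym(V \otimes k^{n}) = \bigoplus_{\lambda} S_{\lambda}(V) \otimes S_{\lambda}(k^{n})$ shows that the $\SL_{t}(k)$-invariants are the summands for which $S_{\lambda}(V)$ is trivial, that is, $\lambda$ is a rectangle with exactly $t$ rows (which forces $t \le n$); these assemble into precisely the subalgebra generated by the size-$t$ minors of $Y$. For the orthogonal and symplectic groups (cases c, d), the invariant form furnishes a $G$-equivariant isomorphism $V \cong V^{\ast}$, so there are no genuine covector variables and the relevant multilinear invariants are governed by the Brauer diagram algebra in place of $S_{n}$: they are spanned by products over perfect matchings of the pairings $\langle y_{i}, y_{j}\rangle = (Y^{\tr}Y)_{ij}$ in the orthogonal case (the hypothesis $\chr(k) \neq 2$ enters here, ensuring the symmetric form is recoverable from its polarization) and $\omega(y_{i}, y_{j}) = (Y^{\tr}\Omega Y)_{ij}$ in the symplectic case. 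Polarizing back gives (c) and (d).

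I expect the genuine obstacle to be twofold. First, even in characteristic zero, the Schur--Weyl and Brauer statements that the equivariant tensor endomorphisms are spanned by diagrams are exactly the hard kernel of the argument; they encode the surjectivity ``every invariant is a contraction'' and cannot be bypassed. Second, and more seriously, the theorem is asserted over an arbitrary infinite field, where polarization fails and neither $k[S_{n}]$ nor the Brauer algebra is semisimple, so the characteristic-zero route collapses. For the characteristic-free statement one must instead follow De Concini--Procesi: set up the algebra of bideterminants, prove the straightening law, and establish a standard-monomial basis theorem, from which the first fundamental theorems for all four families — together with the sharp hypotheses $t \le n$ and $\chr(k) \neq 2$ — fall out simultaneously. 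That combinatorial basis theorem is where the real difficulty lies.
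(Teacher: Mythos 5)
The paper does not actually prove this theorem: it is recalled as a known result, with the statement attributed to Weyl in characteristic zero and the characteristic-free case cited to De Concini--Procesi and Hashimoto. Your proposal is therefore not in conflict with the paper --- indeed it converges on exactly the same endpoint, since you correctly identify the straightening-law/standard-monomial machinery of De Concini--Procesi as the only route that covers an arbitrary infinite field. Your verification of the easy containment $k[\text{generators}] \subseteq S^{G}$ is complete and correct in all four cases, and your characteristic-zero roadmap (polarization and restitution, Schur--Weyl duality for $\GL$, the Cauchy decomposition with rectangular $\lambda$ for $\SL$, Brauer diagrams for $\OO$ and $\Sp$) is the standard and correct outline. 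Two caveats. First, what you have written is a reduction, not a proof: the spanning statements for the symmetric-group and Brauer-algebra centralizers, and the straightening law in the characteristic-free setting, are precisely the hard content of the first fundamental theorems, and you invoke them rather than establish them --- as you candidly acknowledge. Second, in the Schur--Weyl step you reuse the letters $m$ and $n$ for the number of tensor factors after polarization, whereas a multihomogeneous invariant of multidegree $(d_{1},\ldots,d_{m};e_{1},\ldots,e_{n})$ polarizes to a multilinear invariant in $\sum d_{i}$ covectors and $\sum e_{j}$ vectors; the vanishing condition is $\sum d_{i} = \sum e_{j}$, not $m = n$. This is only notational, but worth fixing. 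Given that the paper itself defers the proof to the literature, your proposal is an acceptable and accurate account of where the theorem comes from and where its difficulty lies.
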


	\begin{rem}
		For each of the above actions, the fixed subring is of independent interest for the reasons described below. 
		We denote the invariant subring in the respective cases by $R$.
		\begin{enumerate}[label=(\alph*)]
			\item (General linear group) The ring $R$ is isomorphic to the determinantal ring $k[Z_{m \times n}]/I_{t + 1}(Z)$, 
			where $I_{t + 1}(Z)$ is the ideal generated by the size $t + 1$ minors of $Z$.
			\item (Special linear group) The ring $R$ is the Pl\"ucker coordinate ring of the Grassmannian of $t$-dimensional subspaces of an $n$-dimensional space.
			\item (Orthogonal group) The ring $R$ is isomorphic to $k[Z]/I_{t + 1}(Z)$, where $Z$ is an $n \times n$ symmetric matrix of indeterminates.
			\item (Symplectic group) The ring $R$ is isomorphic to $k[Z]/\Pf_{2t + 2}(Z)$, where $Z$ is an $n \times n$ alternating matrix of indeterminates, and $\Pf_{2t + 2}(Z)$ the ideal generated by its principal $(2t + 2)$-Pfaffians.
		\end{enumerate}
	\end{rem}

\section{Linearly reductive groups} \label{sec:linearly-reductive}

	This section contextualises our results with the broader theory of linearly reductive groups. 
	For the most part, this is only for theoretical interest, as we will compute the Reynolds operator concretely by integrating over a compact Lie group. 
	For an introduction to linear algebraic groups and rational actions, 
	we refer the reader to one of~\Cite{FogartyInvariant, MumfordFourteenthProblem, HochsterInvariantSurvey, DerksenKemper}. 
	We record the relevant facts here. 

	\begin{defn} \label{defn:reynolds-operator}
		Let $G$ be a linear algebraic group over the field $k$, 
		and $V$ a rational representation of $G$. 
		A \deff{Reynolds operator} is a $k$-linear, $G$-equivariant splitting $\mathcal{R} \colon k[V] \to k[V]^{G}$.
	\end{defn}

	\begin{thm} \label{thm:linearly-reductive-reynolds-unique-linear}
		If $G$ is linearly reductive, 
		then for every rational representation $V$, 
		there exists a \emph{unique} Reynolds operator $\mathcal{R} \colon k[V] \to k[V]^{G}$. 
		Moreover, $\mathcal{R}$ is $k[V]^{G}$-linear.
	\end{thm}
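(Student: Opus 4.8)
The plan is to use the standard characterization of linear reductivity: a linear algebraic group $G$ is linearly reductive precisely when every rational representation is completely reducible, i.e.\ a direct sum of irreducible subrepresentations. Throughout I would exploit that the action of $G$ on $k[V]$ is degree-preserving, so that $k[V] = \bigoplus_{d \ge 0} k[V]_{d}$ with each graded piece $k[V]_{d}$ a \emph{finite-dimensional} rational representation, and correspondingly $k[V]^{G} = \bigoplus_{d \ge 0} (k[V]_{d})^{G}$. This reduces every assertion to a statement about the finite-dimensional pieces.

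First I would record the one representation-theoretic input that does all the work. For a finite-dimensional rational representation $W$, complete reducibility yields a canonical decomposition $W = W^{G} \oplus W_{+}$, where $W^{G}$ is the invariant subspace (equivalently, the sum of all trivial subrepresentations) and $W_{+}$ is the sum of all non-trivial irreducible subrepresentations. I would then prove the key vanishing: if $U$ is a $G$-module on which $G$ acts trivially, then every $G$-equivariant $k$-linear map $W_{+} \to U$ is zero. This is immediate from Schur's lemma, since a non-zero $G$-map out of a non-trivial irreducible into a trivial module would force that irreducible to be trivial. Let $\rho_{W} \colon W \to W^{G}$ denote the projection with kernel $W_{+}$; it is $k$-linear, $G$-equivariant, and restricts to the identity on $W^{G}$.

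With this in hand, existence is obtained by setting $\mathcal{R} \coloneqq \bigoplus_{d} \rho_{k[V]_{d}}$; it is $k$-linear, $G$-equivariant, and a splitting by construction, hence a Reynolds operator in the sense of \Cref{defn:reynolds-operator}. For uniqueness I would take an arbitrary Reynolds operator $\mathcal{R}'$ and restrict it to a graded piece $k[V]_{d}$. Because $k[V]_{d}$ is $G$-stable and the target $k[V]^{G}$ carries the trivial action, $\mathcal{R}'|_{k[V]_{d}}$ is a $G$-map into a trivial module; the vanishing lemma forces it to kill $(k[V]_{d})_{+}$, while the splitting condition forces it to be the inclusion on $(k[V]_{d})^{G}$. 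Hence $\mathcal{R}'|_{k[V]_{d}} = \rho_{k[V]_{d}} = \mathcal{R}|_{k[V]_{d}}$ for every $d$, so $\mathcal{R}' = \mathcal{R}$. As a byproduct this also shows that any Reynolds operator is automatically degree-preserving.

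Finally, for $k[V]^{G}$-linearity, fix $r \in k[V]^{G}$ and consider the auxiliary map $T_{r} \colon k[V] \to k[V]^{G}$ defined by $T_{r}(f) = \mathcal{R}(rf) - r\mathcal{R}(f)$. Using that $r$ is invariant one checks that multiplication by $r$ is $G$-equivariant, whence $T_{r}$ is $G$-equivariant with values in the trivial module $k[V]^{G}$; moreover $T_{r}$ vanishes on $k[V]^{G}$, since for invariant $f$ both terms equal $rf$. Restricting $T_{r}$ to each $k[V]_{d}$ and applying the vanishing lemma to $(k[V]_{d})_{+}$, together with the vanishing on $(k[V]_{d})^{G}$, gives $T_{r}|_{k[V]_{d}} = 0$ for all $d$, so $T_{r} \equiv 0$, which is exactly $k[V]^{G}$-linearity. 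The main obstacle is not any single computation but locating the correct conceptual input: the whole proof hinges on the canonical isotypic splitting $W = W^{G} \oplus W_{+}$ and the $\operatorname{Hom}$-vanishing it provides, and on the observation that the degreewise finiteness of $k[V]$ lets one transport these finite-dimensional facts to the infinite-dimensional ring.
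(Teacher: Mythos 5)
Your proof is correct. Note, though, that the paper does not prove this theorem at all: it simply cites Theorem 2.2.5 and Corollary 2.2.7 of Derksen--Kemper. What you have written is a sound, self-contained version of the standard argument behind those citations: reduce to the finite-dimensional graded pieces $k[V]_{d}$, use complete reducibility to get the canonical isotypic splitting $W = W^{G} \oplus W_{+}$, and observe that every equivariant map from $W_{+}$ into a trivial module vanishes. All three parts check out --- existence via the degreewise projections, uniqueness and automatic degree-preservation from the vanishing lemma applied to $\mathcal{R}'|_{k[V]_{d}}$, and $k[V]^{G}$-linearity via the auxiliary equivariant map $T_{r}(f) = \mathcal{R}(rf) - r\mathcal{R}(f)$, which is correctly seen to be equivariant into a trivial module and to vanish on invariants. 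The one point worth stating explicitly if you write this up in full is why $W^{G}$ (the fixed subspace) coincides with the trivial isotypic component and why $W_{+}$ is a canonical complement rather than one depending on a chosen decomposition; both are standard facts about semisimple modules, and your appeal to them is legitimate.
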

	\begin{proof} 
		The statements are Theorem 2.2.5 and Corollary 2.2.7 in~\Cite{DerksenKemper}, respectively.
	\end{proof}

	\begin{example}
		We give an example of a group $G$ acting on a polynomial ring $S$ for which there exists an $S^{G}$\nobreakdash-linear splitting but no $G$-equivariant splitting. 
		Let $G$ be the symmetric group on two element, and $S \coloneqq \mathbb{F}_{2}[x, y]$. 
		The group $G$ acts on $S$ by permuting the variables, and the invariant subring is $\mathbb{F}_{2}[x+y, xy]$. 
		Because $S$ is a free $S^{G}$-module with $\{1, x\}$ as a basis, the inclusion $S^{G} \into S$ splits $S^{G}$-linearly. 
		Suppose that $\pi \colon S \to S^{G}$ is a $G$-equivariant splitting. 
		Then, $\pi(x) = \pi(y)$ because $x$ and $y$ are in the same orbit. But then,
		\begin{equation*} 
			x + y = \pi(x + y) = \pi(x) + \pi(y) = 2 \pi(x) = 0,
		\end{equation*}
		a contradiction. 
		Thus, $S^{G} \into S$ admits no $G$-equivariant splitting even though it splits $S^{G}$-linearly. 
		This example extends mutatis mutandis to any positive characteristic $p$ by considering the permutation action of~$\Sigma_{p}$~--- the symmetric group on $p$ elements --- on the polynomial ring $\mathbb{F}_{p}[x_{1}, \ldots, x_{p}]$.
	\end{example}

	\begin{example} \label{ex:A3-not-split}
		We now give an example of a group action for which no $S^{G}$-linear splitting exists. 
		Consider the action of the alternating group $G \coloneqq A_{3}$ on the polynomial ring $S \coloneqq \mathbb{F}_{3}[x, y, z]$ by permuting the variables. 
		If we let $e_{1}$, $e_{2}$, $e_{3}$ denote the elementary symmetric polynomials in $x$, $y$, $z$ and set $\Delta \coloneqq (x - y)(y - z)(z - x)$, 
		then one can check that 
		$\Delta \in S^{G}$, 
		$\Delta \notin (e_{1}, e_{2}, e_{3}) S^{G}$, but 
		$\Delta \in (e_{1}, e_{2}, e_{3}) S$. 
		This implies that $S^{G} \into S$ does not split over $S^{G}$. 
		More generally, if $A_{n}$ acts on $S = \mathbb{F}_{p}[x_{1}, \ldots, x_{n}]$ by permuting variables, the inclusion $S^{A_{n}} \into S$ splits if and only if $p$ does not divide $\md{A_{n}}$;
		the nontrivial implication was proven 
		in~\Cite[Theorem 12.2]{Glassbrenner:CMFrational} for $p \nmid n(n - 1)$, 
		and the general case can be found in 
		\Cite[Theorem 5.5]{Singh:FailureF}, 
		\Cite{Smith:AlternatingInvariants},
		\Cite[Theorem 2.18]{Jeffries:Thesis}, and 
		\Cite[Corollary 4.2]{GoelJeffriesSingh}.
	\end{example}

	\begin{example}
		If $k$ is a field of characteristic zero, 
		then the classical groups $\GL_{n}(k)$, $\SL_{n}(k)$, $\OO_{n}(k)$, and $\Sp_{2n}(k)$ are all linearly reductive, 
		as are all finite groups.
		For a finite group $G$, 
		the Reynolds operator is just averaging over the group: 
		$\mathcal{R}(f) = \frac{1}{\md{G}} \sum\limits_{g \in G} g(f)$. 
	\end{example}

	The above Reynolds operator extends naturally to smooth actions of a compact Lie group; see \Cref{thm:reynolds-for-lie-group}. 
	The following theorem, 
	in conjunction with \Cref{prop:invariants-and-operator-over-GC-and-intersection}, 
	tells us how the computation of the Reynolds operator for a linearly reductive group over $\mathbb{C}$ can be reduced to that for a compact Lie group.

	\begin{thm} \label{thm:equivalent-linearly-reductive-over-C}
		Let $G$ be a linear algebraic group over $\mathbb{C}$. The following are equivalent.
		\begin{enumerate}[label=(\alph*)]
			\item $G$ is linearly reductive.
			\item $G$ has a Zariski-dense subgroup that is a compact Lie group (in the Euclidean topology).
		\end{enumerate}
	\end{thm}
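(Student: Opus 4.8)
My plan is to establish the two implications separately; (b)$\Rightarrow$(a) is elementary, while (a)$\Rightarrow$(b) rests on the structure theory of complex reductive groups, which I would import rather than reprove.

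For (b)$\Rightarrow$(a), I would take a Zariski-dense compact Lie subgroup $K \subseteq G$ and an arbitrary rational representation $V$ of $G$, and show $V$ is completely reducible over $G$. Restricting the action to $K$ and averaging an arbitrary Hermitian form against the normalised Haar measure on $K$ produces a $K$-invariant positive-definite inner product on $V$; hence every $K$-stable subspace has a $K$-stable orthogonal complement, so $V$ is completely reducible over $K$. The one place Zariski density enters is the observation that $K$-stability and $G$-stability of a subspace coincide: for a subspace $W \subseteq V$, the locus $\{g \in G : g(W) \subseteq W\}$ is Zariski-closed---being cut out by the vanishing of matrix-entry polynomials in the morphism $G \to \GL(V)$---and contains $K$, hence contains the closure $\overline{K} = G$. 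Thus any $K$-stable complement is automatically $G$-stable, and $V$ is completely reducible over $G$. As $V$ is arbitrary, $G$ is linearly reductive.

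For (a)$\Rightarrow$(b), I would first invoke that, in characteristic zero, linearly reductive is equivalent to reductive, and that a complex reductive subgroup $G \subseteq \GL_n(\mathbb{C})$ can---after a linear change of coordinates---be taken to be self-adjoint, i.e.\ stable under conjugate transposition $g \mapsto g^{\ast}$ (Mostow). Then $K := G \cap \UU_n(\mathbb{C})$ is a closed, hence compact, subgroup of the unitary group, and by Cartan's closed-subgroup theorem it is a compact Lie group; writing $\mathfrak{k} := \operatorname{Lie}(K)$, one has the polar decomposition $\mathfrak{g} = \mathfrak{k} \oplus i\mathfrak{k}$ and the global factorisation $G = K\exp(i\mathfrak{k})$. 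It remains to prove $K$ is Zariski-dense. Let $f$ be a regular function on $G$ vanishing on $K$. For each $X \in \mathfrak{k}$ the map $z \mapsto f(\exp(zX))$ is entire in $z \in \mathbb{C}$ (a regular function is holomorphic, and $z \mapsto \exp(zX)$ is holomorphic into $G$) and vanishes for all real $z$, since then $\exp(zX) \in K$; by the identity theorem it vanishes identically, so $f(\exp(iX)) = 0$, and thus $f$ vanishes on $\exp(i\mathfrak{k})$. Applying the same to the translates $g \mapsto f(kg)$ for $k \in K$---which likewise vanish on $K$---shows $f$ vanishes on all of $K\exp(i\mathfrak{k}) = G$. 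Hence $f \equiv 0$, so $K$ is Zariski-dense and (b) holds.

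The substance of the argument, and the step I would not reprove, is the structure-theoretic input in (a)$\Rightarrow$(b): the reduction of linear reductivity to reductivity and the existence of a self-adjoint form with its attendant polar decomposition $G = K\exp(i\mathfrak{k})$, which encodes that a complex reductive group is the complexification of its maximal compact subgroup. Granting this, the density argument via the identity theorem is short, and the reverse implication (b)$\Rightarrow$(a) is purely formal; the genuine work is imported from the theory of reductive groups and their compact real forms, which I would cite.
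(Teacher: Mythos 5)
Your proof is correct, but it takes a genuinely different---and more ambitious---route than the paper, which in fact offers no proof of this equivalence at all: it defers the general statement to the literature (Kraft, Anhang II, Satz 4) and only establishes the piece it actually uses, namely condition (b) for the four classical groups, by exhibiting the concrete dense compact subgroups $\UU_{n}(\mathbb{C})$, $\SU_{n}(\mathbb{C})$, $\OO_{n}(\mathbb{R})$, $\SpU_{2n}(\mathbb{C})$ in \Cref{thm:density}, proved in \Cref{sec:proof-density} by elementary means (density of diagonal matrices, the decomposition $UDV$, and an explicit retraction $\GL_{n} \onto \SL_{n}$). Your (b)$\Rightarrow$(a) is the classical unitarian trick, and the one nontrivial step---that $K$-stability and $G$-stability of a subspace coincide because the stabiliser $\{g \in G : g(W) \subseteq W\}$ is Zariski-closed and contains the dense subgroup $K$---is exactly right; this is the same density mechanism the paper uses in \Cref{prop:invariants-and-operator-over-GC-and-intersection}, though the paper never assembles it into a proof of linear reductivity. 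Your (a)$\Rightarrow$(b) correctly isolates the heavy imported input (linearly reductive equals reductive in characteristic zero, Mostow's self-adjointness theorem, and the polar decomposition $G = K\exp(i\mathfrak{k})$), and the density argument via the identity theorem is sound: since $\mathfrak{g}$ is a complex subspace, $\exp(zX) \in G$ for all $z \in \mathbb{C}$ and $X \in \mathfrak{k}$, so a regular function vanishing on $K$ vanishes on each translate $k\exp(i\mathfrak{k})$ and hence on $G$. The trade-off is clear: you prove the full biconditional at the cost of citing substantial structure theory, whereas the paper stays elementary and self-contained but only verifies the dense-compact-subgroup condition for the specific groups it needs---the direction (a)$\Rightarrow$(b) is never used in the paper's computations and is accordingly never proved there.
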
 
	We shall deduce the above theorem for the classical groups of interest by producing Zariski-dense subgroups in \Cref{thm:density}.

\section{The Reynolds operator for a Lie group} \label{sec:splitting-over-lie-group}

	We will now describe the Reynolds operator for a compact Lie group acting on a 
	polynomial ring. 
	Strictly speaking, the term ``Reynolds operator'' was defined for the rational action of a linear algebraic group, but we continue to use this term to mean a ($\mathbb{C}$-)linear $G$-equivariant splitting.
	We first recall some theory of integration over such a group.

	In this section, a finite-dimensional vector space over $\mathbb{R}$ will have its canonical structure of a real differentiable manifold. 
	Examples include $\mathbb{C}$ and finite-dimensional vector spaces over $\mathbb{C}$.
	Let $G$ be a compact real Lie group and $\dG$ denote the (normalised) Haar measure on $G$. 
	Given an element $g \in G$, 
	we denote by $L_{g}$ and $R_{g}$ the left and right translation maps:
	\begin{equation} \label{eq:translation-maps}
		\begin{aligned}
			L_{g} \colon G &\to G, \\
						 h &\mapsto gh,
		\end{aligned}
		\qquad\qquad
		\begin{aligned}
			R_{g} \colon G &\to G, \\
						 h &\mapsto hg.
		\end{aligned}
	\end{equation}

	For an introduction to the Haar measure, we refer the reader to one of~\Cite{HalmosMeasure, RoydenAnalysis, LangAnalysis}. 
	We next recall the properties of interest to us.

	\begin{thm} \label{thm:invariance-to-field}
		Let $\psi \colon G \to \mathbb{R}$ be smooth, and $g \in G$. Then,
		\begin{equation*} 
			\int_{G} \psi \,\dG 
			= \int_{G} (\psi \circ L_{g}) \,\dG 
			= \int_{G} (\psi \circ R_{g}) \,\dG.
		\end{equation*}
		If $\psi$ is constant and takes the value $1$, then
		\begin{equation*} 
			\int_{G} \psi \, \dG = 1.
		\end{equation*}
	\end{thm}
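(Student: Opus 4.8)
The plan is to take the left-invariance and the normalization as the defining features of the normalized Haar measure $\dG$ --- as furnished by the existence and uniqueness theorem for Haar measure in the references \Cite{HalmosMeasure, RoydenAnalysis, LangAnalysis} --- and to spend the real effort on the right-invariance, which is the only assertion with genuine content. By definition, $\dG$ is the unique left-invariant Radon measure on $G$ with total mass $1$. Since $(\psi \circ L_{g})(h) = \psi(gh)$, left-invariance is exactly the statement $\int_{G} (\psi \circ L_{g}) \,\dG = \int_{G} \psi \,\dG$, which gives the first equality; and the assertion $\int_{G} 1 \,\dG = 1$ is just the normalization.

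For the second equality I would deduce right-invariance from left-invariance using that $G$ is compact. Fix $g \in G$ and introduce the measure $\mu_{g}$ determined by $\int_{G} \psi \, \mathrm{d}\mu_{g} \coloneqq \int_{G} (\psi \circ R_{g}) \,\dG$. A short computation shows $\mu_{g}$ is again left-invariant: replacing $\psi$ by $\psi \circ L_{h}$ and using that $L_{h}$ and $R_{g}$ commute reduces the claim to the left-invariance of $\dG$ applied to $\psi \circ R_{g}$. By the uniqueness of left Haar measure up to a positive scalar, there is a constant $\Delta(g) > 0$ with $\mu_{g} = \Delta(g)\,\dG$.

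It remains to show $\Delta(g) = 1$. I would evaluate both measures on the whole group: on the one hand $\mu_{g}(G) = \int_{G} (1 \circ R_{g}) \,\dG = \int_{G} 1 \,\dG = 1$, and on the other hand $\mu_{g}(G) = \Delta(g) \int_{G} 1 \,\dG = \Delta(g)$. Comparing gives $\Delta(g) = 1$, hence $\mu_{g} = \dG$, which is precisely $\int_{G} (\psi \circ R_{g}) \,\dG = \int_{G} \psi \,\dG$, completing the chain of equalities.

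The single essential use of compactness is the finiteness of the total mass $\int_{G} 1 \,\dG = 1$ that powers the last step; equivalently, the modular function $\Delta \colon G \to (\mathbb{R}_{>0}, \times)$ is a continuous homomorphism whose image must be the trivial (compact) subgroup. This is exactly where the argument would break for a general locally compact group, for which left- and right-invariant measures genuinely differ. The verifications that $\mu_{g}$ is a left-invariant Radon measure and that the uniqueness theorem applies I expect to be routine once this framework is in place.
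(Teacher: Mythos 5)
Your proposal is correct, but note that the paper does not actually prove this theorem: it is recalled as a standard property of the normalised Haar measure, with the reader referred to the cited texts of Halmos, Royden, and Lang, so there is no internal proof to compare against. What you have written is the standard argument that compact groups are unimodular: left-invariance and normalisation are taken as the defining properties, and right-invariance is deduced by observing that $\psi \mapsto \int_{G}(\psi \circ R_{g})\,\dG$ defines a left-invariant Radon measure of total mass $1$, hence coincides with $\dG$ by the uniqueness theorem. This is sound, and you correctly isolate the one place compactness enters --- the finiteness of the total mass, which forces the modular function $\Delta$ to be identically $1$. The points you defer as routine (that $\mu_{g}$ is a nonzero Radon measure, being the pushforward of $\dG$ under the homeomorphism $R_{g}$, and that the uniqueness theorem applies to it) are indeed routine. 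In effect your argument reproduces the content of the cited references; within the logic of the paper one could equally well have stopped after the first paragraph and cited unimodularity of compact groups directly.
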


	We may naturally extend the integration of scalar-valued functions to vector-valued functions:
	\begin{defn}
		Let $V$ be a finite-dimensional $\mathbb{R}$-vector space, 
		and $\psi \colon G \to V$ a smooth function. 
		Fix a basis $\{v_{1}, \ldots, v_{n}\}$ of $V$. 
		Let $\psi_{i} \colon G \to \mathbb{R}$ be the corresponding coordinate functions, 
		satisfying $\psi(g) = \sum \psi_{i}(g) v_{i}$. 
		We define
		\begin{equation*} 
			\int_{G} \psi \coloneqq \sum_{i = 1}^{n} \left(\int_{G} \psi_{i} \,\dG\right) v_{i} \in V.
		\end{equation*}
	\end{defn}
	One checks that the above definition is independent of the choice of basis. 
	Note that our notation above drops the ``$\dG$'' when integrating vector-valued functions. 
	This is for ease of notation as we will always be integrating with respect to the Haar measure.
	The linearity of scalar integration and the properties of the Haar measure readily extend to the following.
	\begin{lem} \label{lem:integral-commute-linear-maps}
		Let $T \colon V \to W$ be a linear map of finite-dimensional vector spaces, and let $\psi \colon G \to V$ be a smooth function. Then,
		\begin{equation*} 
			\int_{G} (T \circ \psi) = T\left(\int_{G} \psi\right).
		\end{equation*}
	\end{lem}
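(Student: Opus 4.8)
The plan is to reduce the vector-valued statement to the scalar case, using the fact---noted immediately after the definition---that $\int_{G}$ does not depend on the choice of basis. First I would fix a basis $\{v_{1}, \ldots, v_{n}\}$ of $V$ and a basis $\{w_{1}, \ldots, w_{m}\}$ of $W$, and record the matrix of $T$ with respect to these by writing $T(v_{i}) = \sum_{j} t_{ji} w_{j}$. Writing $\psi(g) = \sum_{i} \psi_{i}(g) v_{i}$ with smooth coordinate functions $\psi_{i}$, the composite $T \circ \psi$ is smooth (a linear map is smooth), and its $j$-th coordinate function is $g \mapsto \sum_{i} t_{ji}\psi_{i}(g)$, since
\[
	T(\psi(g)) = \sum_{i} \psi_{i}(g)\, T(v_{i}) = \sum_{j} \Bigl(\sum_{i} t_{ji}\psi_{i}(g)\Bigr) w_{j}.
\]

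Next I would compute both sides directly from the definition. For the left-hand side,
\[
	\int_{G} (T \circ \psi) = \sum_{j} \Bigl(\int_{G} \sum_{i} t_{ji}\psi_{i} \,\dG\Bigr) w_{j} = \sum_{j} \Bigl(\sum_{i} t_{ji}\int_{G} \psi_{i} \,\dG\Bigr) w_{j},
\]
where the second equality is the linearity of the \emph{scalar} Haar integral. For the right-hand side, applying $T$ to the definition of $\int_{G} \psi$ gives
\[
	T\Bigl(\int_{G} \psi\Bigr) = \sum_{i} \Bigl(\int_{G} \psi_{i} \,\dG\Bigr) T(v_{i}) = \sum_{j} \Bigl(\sum_{i} t_{ji}\int_{G} \psi_{i} \,\dG\Bigr) w_{j}.
\]
Comparing the two expressions coordinatewise in the basis $\{w_{j}\}$ yields the claim.

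There is no substantive obstacle here: the entire content is that integration is performed coordinatewise and that the ordinary integral is linear. The only points requiring (routine) care are that the coordinate functions $\psi_{i}$ are genuinely smooth, so that the scalar integrals appearing above are defined, and that the final comparison is legitimately basis-independent---both of which are guaranteed by the remarks accompanying the definition of the vector-valued integral. One could alternatively phrase the argument by first establishing that $\int_{G}$ is $\mathbb{R}$-linear and then invoking that $T$ is determined by its action on a basis; but the explicit basis computation above makes the bookkeeping transparent and avoids separately isolating linearity of $\int_{G}$ as an intermediate step.
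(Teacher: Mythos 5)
Your proof is correct and is exactly the argument the paper intends: the paper states this lemma without proof, remarking only that it ``readily extends'' from the linearity of scalar integration, and your coordinatewise computation with the matrix of $T$ is precisely that routine verification. Nothing further is needed.
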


	\begin{lem} \label{lem:invariance-to-vector-space}
		Let $\psi \colon G \to V$ be smooth, and $g \in G$. Then,
		\begin{equation*} 
			\int_{G} \psi = \int_{G} (\psi \circ L_{g}) = \int_{G} (\psi \circ R_{g}).
		\end{equation*}
		If $\psi$ is constant and takes the value $v$, then
		\begin{equation*} 
			\int_{G} \psi = v.
		\end{equation*}
	\end{lem}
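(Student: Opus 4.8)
The plan is to reduce everything to the scalar case already established in \Cref{thm:invariance-to-field}, exploiting that vector-valued integration was \emph{defined} coordinate-wise. First I would fix, once and for all, a basis $\{v_{1}, \ldots, v_{n}\}$ of $V$ together with the associated coordinate functions $\psi_{i} \colon G \to \mathbb{R}$ of $\psi$, so that $\psi = \sum_{i} \psi_{i} v_{i}$; this is legitimate precisely because the value $\int_{G} \psi$ does not depend on the chosen basis, as remarked immediately after its definition.

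The key observation is that pre-composition with a translation acts transparently on coordinates. Since
\[
	(\psi \circ L_{g})(h) = \psi(gh) = \sum_{i=1}^{n} \psi_{i}(gh)\, v_{i} = \sum_{i=1}^{n} (\psi_{i} \circ L_{g})(h)\, v_{i},
\]
the coordinate functions of $\psi \circ L_{g}$ with respect to the fixed basis are exactly $\psi_{i} \circ L_{g}$. Applying the definition of the vector integral and then the scalar invariance of \Cref{thm:invariance-to-field} to each coordinate gives
\[
	\int_{G} (\psi \circ L_{g}) = \sum_{i=1}^{n} \left(\int_{G} (\psi_{i} \circ L_{g})\,\dG\right) v_{i} = \sum_{i=1}^{n} \left(\int_{G} \psi_{i} \,\dG\right) v_{i} = \int_{G} \psi.
\]
The identity for $R_{g}$ follows verbatim, replacing $L_{g}$ by $R_{g}$ throughout and invoking the right-invariance half of \Cref{thm:invariance-to-field}.

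For the final claim, if $\psi$ is constant with value $v$, I would expand $v = \sum_{i} c_{i} v_{i}$ in the fixed basis, so that each coordinate function $\psi_{i}$ is the constant $c_{i}$. The normalization clause of \Cref{thm:invariance-to-field} then yields $\int_{G} \psi_{i} \,\dG = c_{i}$, whence $\int_{G} \psi = \sum_{i} c_{i} v_{i} = v$.

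There is no serious obstacle here: the entire content is the bookkeeping that pre-composition with $L_{g}$ or $R_{g}$ commutes with passing to coordinates, after which the scalar results do all the work. The only point requiring a moment's care is the implicit appeal to basis-independence, which is what lets me carry out the whole computation in a single fixed basis rather than tracking how coordinates transform.
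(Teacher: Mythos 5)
Your proposal is correct and matches the paper's intent exactly: the paper gives no separate proof, stating only that the scalar properties of the Haar measure ``readily extend,'' and the coordinate-wise reduction to \Cref{thm:invariance-to-field} is precisely that extension. The one step you leave implicit --- that $\psi \circ L_{g}$ is again smooth because $L_{g}$ is a diffeomorphism, so the scalar theorem applies to each $\psi_{i} \circ L_{g}$ --- is immediate and does not affect correctness.
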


	\begin{defn}
		Suppose $V$ is an infinite-dimensional vector space, 
		and $\Psi \colon G \to V$ a function such that 
		the vector space spanned by the image of $\Psi$ is finite-dimensional. 
		Let $W \subset V$ be any finite-dimensional subspace containing the image of $\Psi$, 
		and let $\psi \colon G \to W$ be the restriction of $\Psi$. 
		We say that $\Psi$ is \deff{smooth} if $\psi$ is smooth, and define
		\begin{equation*} 
			\int_{G} \Psi \coloneqq \int_{G} \psi,
		\end{equation*}
	\end{defn}
	where we note that the above definitions are independent of the choice of $W$.

	Let $S = \mathbb{C}[x_{1}, \ldots, x_{n}]$ be a polynomial ring, 
	and let $[S]_{1}$ denote the $\mathbb{C}$-vector space of homogeneous degree one polynomials. 
	There is a natural isomorphism of groups
	\begin{equation*} 
		\{\text{degree-preserving $\mathbb{C}$-algebra automorphisms of $S$}\} \longleftrightarrow \{\text{$\mathbb{C}$-linear automorphisms of $[S]_{1}$}\}.
	\end{equation*}

	A degree-preserving $\mathbb{C}$-algebra action of $G$ on $S$ is called \deff{smooth} if the corresponding action
	$G \times [S]_{1} \to [S]_{1}$ is smooth. 
	In this case, the corresponding action $G \times [S]_{d} \to [S]_{d}$ is smooth for all $d \ge 0$, 
	where $[S]_{d}$ denotes the space of homogeneous polynomials of degree $d$.
	For $f \in S$, define the orbit map
	\begin{align*} 
		\psi_{f} \colon G &\to S \\
		g &\mapsto g(f).
	\end{align*}
	The function $\psi_{f}$ takes values within a finite-dimensional subspace of $S$, 
	for example, 
	the space of polynomials of degree at most the degree of $f$. 
	If the $G$-action is smooth, then $\psi_{f}$ is a smooth function.

	\begin{thm} \label{thm:reynolds-for-lie-group}
		Let $G$ be a compact Lie group acting smoothly on the polynomial ring $S \coloneqq \mathbb{C}[x_{1}, \ldots, x_{n}]$ by degree-preserving $\mathbb{C}$\nobreakdash-algebra automorphisms. Then, $S^{G} \into S$ splits with a degree-preserving, $G$-equivariant, $S^{G}$-linear splitting $\mathcal{R} \colon S \onto S^{G}$ given by
		\begin{equation*} 
			\mathcal{R} \colon f \mapsto \int_{G} \psi_{f}.
		\end{equation*}
		Suggestively, the above may be written as
		\begin{equation*} 
			\mathcal{R}(f) = \int_{g \in G} g(f), 
		\end{equation*}
		resembling the Reynolds operator for finite groups.
	\end{thm}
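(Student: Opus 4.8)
The plan is to verify the six asserted properties---well-definedness (landing in $S^{G}$), the splitting identity, $\mathbb{C}$-linearity, degree-preservation, $G$-equivariance, and $S^{G}$-linearity---one at a time, deducing each formally from \Cref{lem:integral-commute-linear-maps} and \Cref{lem:invariance-to-vector-space}. The guiding principle is that the two ways of moving a group element past the orbit map $\psi_{f}$ correspond to the two lemmas: precomposing $\psi_{f}$ with a translation $L_{h}$ or $R_{h}$ merely reindexes the orbit and is handled by the invariance lemma, whereas postcomposing with the action of a fixed $h$---viewed as a $\mathbb{C}$-linear map---is handled by the commutation lemma. Throughout, for a fixed $f$ of degree at most $d$ I would work inside the finite-dimensional subspace $W \coloneqq [S]_{0} \oplus \cdots \oplus [S]_{d}$, which contains the image of $\psi_{f}$ and is stable under $G$ because the action is degree-preserving; every $h \in G$ therefore restricts to a $\mathbb{C}$-linear automorphism $h|_{W}$ to which the lemmas apply.

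For well-definedness I would show $h(\mathcal{R}(f)) = \mathcal{R}(f)$ for every $h \in G$. The key computation is the identity $h \circ \psi_{f} = \psi_{f} \circ L_{h}$, which is immediate from $h(g(f)) = (hg)(f)$. Applying \Cref{lem:integral-commute-linear-maps} to the linear map $h|_{W}$ gives $h(\mathcal{R}(f)) = \int_{G}(h \circ \psi_{f})$, and rewriting the right-hand side via this identity and then invoking left-invariance from \Cref{lem:invariance-to-vector-space} returns $\mathcal{R}(f)$. The same bookkeeping yields $G$-equivariance: there I would instead use the companion identity $\psi_{h(s)} = \psi_{s} \circ R_{h}$, again a direct consequence of the group axioms, and conclude $\mathcal{R}(h(s)) = \mathcal{R}(s)$ from right-invariance.

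The remaining properties are lighter. For the splitting identity, if $r \in S^{G}$ then $\psi_{r}$ is the constant function with value $r$, so the last clause of \Cref{lem:invariance-to-vector-space} gives $\mathcal{R}(r) = r$; in particular $\mathcal{R}$ is surjective onto $S^{G}$, which is the content of the arrow $\onto$. Degree-preservation follows because for homogeneous $f$ of degree $d$ the map $\psi_{f}$ takes values in $[S]_{d}$, whence so does its integral. The $\mathbb{C}$-linearity follows from $\psi_{f + f'} = \psi_{f} + \psi_{f'}$ and $\psi_{cf} = c\,\psi_{f}$---each $g$ acting $\mathbb{C}$-linearly---together with linearity of the integral. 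Finally, for $S^{G}$-linearity I would observe $\psi_{rs} = m_{r} \circ \psi_{s}$, where $m_{r}$ denotes multiplication by $r \in S^{G}$; since $m_{r}$ is a $\mathbb{C}$-linear map between suitable finite-dimensional subspaces, \Cref{lem:integral-commute-linear-maps} gives $\mathcal{R}(rs) = r\,\mathcal{R}(s)$.

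No step presents a genuine difficulty; the entire argument is formal once the two lemmas are in hand. The only point requiring care is the bookkeeping of finite-dimensional subspaces: one must check that the image of each orbit map lies in a $G$-stable finite-dimensional space, that the group action and multiplication by $r$ genuinely restrict to $\mathbb{C}$-linear maps there, and that the resulting integrals do not depend on the chosen subspace---facts already secured by the degree-preserving hypothesis and by the well-definedness noted immediately after the definition of $\int_{G}$.
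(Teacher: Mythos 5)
Your proposal is correct and follows essentially the same route as the paper: the same two identities $h \circ \psi_{f} = \psi_{f} \circ L_{h}$ and $\psi_{h(s)} = \psi_{s} \circ R_{h}$, combined with \Cref{lem:integral-commute-linear-maps} and \Cref{lem:invariance-to-vector-space}, are exactly what the paper's proof uses (with $\mu_{g}$ and $\rho_{f}$ playing the roles of your $h|_{W}$ and $m_{r}$). The extra care you take with the finite-dimensional, $G$-stable subspaces is implicit in the paper's setup and does not change the argument.
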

	\begin{proof} 
		The $\mathbb{C}$-linearity of $\mathcal{R}$ is clear. 
		If $f$ is homogeneous, then $\psi_{f}$ takes values in 
		subspace $[S]_{\deg(f)}$ and in turn, 
		$\mathcal{R}(f) \in [S]_{\deg(f)}$. 
		Thus, $\mathcal{R}$ is a degree-preserving $\mathbb{C}$-linear map. 

		For the rest of the proof, 
		we will make repeated use of \Cref{lem:integral-commute-linear-maps,lem:invariance-to-vector-space}. 
		Recall that $L_{g}$ and $R_{g}$ denote the translation maps, 
		defined in~\Cref{eq:translation-maps}.
		For $f \in S$ and $g \in G$, we define the $\mathbb{C}$-linear functions
		$S \xrightarrow{\rho_{f}} S$ and $S \xrightarrow{\mu_{g}} S$ 
		given by left multiplication and the $G$-action, respectively. 
		Consequently, 
		\begin{align*} 
			\mathcal{R}(f)
			&= \int_{G} \psi_{f} 
			= \int_{G} \psi_{f} \circ R_{g}  
			= \int_{G} \psi_{g(f)} 
			= \mathcal{R}(g(f)) \\[5pt]
			&= \int_{G} \psi_{f} \circ L_{g}
			= \int_{G} \mu_{g} \circ \psi_{f}
			= \mu_{g}\left(\int_{G} \psi_{f}\right)
			= g(\mathcal{R}(f)).
		\end{align*}
		The above shows that $\mathcal{R}$ takes values in $S^{G}$ and is $G$-equivariant. 
		Lastly, if $f \in S^{G}$ and $h \in S$, then
		\begin{equation*} 
			\mathcal{R}(fh) = \int_{G} \psi_{fh} = \int_{G} \rho_{f} \circ \psi_{h} = \rho_{f} \left(\int_{G} \psi_{h}\right) = f \mathcal{R}(h),
		\end{equation*}
		and $\psi_{f}$ is identically equal to $f$, giving us
		\begin{equation*} 
			\mathcal{R}(f) = \int_{G} \psi_{f} = f.
		\end{equation*}
		This finishes the proof that $\mathcal{R}$ is an $S^{G}$-linear splitting.
	\end{proof}

\section{The Reynolds operator for the classical actions} \label{sec:reynolds-classical}
	
	Fix an integer $t \ge 1$ and let $\GG(-)$ be one of $\GL_{t}(-)$, $\SL_{t}(-)$, $\OO_{t}(-)$, or $\Sp_{t}(-)$,
	where we assume that $t$ is even in the last case. 
	Define $C \coloneqq \GG(\mathbb{C}) \cap \UU_{t}(\mathbb{C})$. 
	The intersections in the respective cases are $\UU_{n}(\mathbb{C})$, $\SU_{n}(\mathbb{C})$, $\OO_{n}(\mathbb{R})$, and $\SpU_{n}(\mathbb{C})$.
	Let $k$ be an arbitrary field of characteristic zero. 

	\begin{thm}[The density theorem] \label{thm:density}
		With the above notation, we have:
		\begin{enumerate}[label=(\alph*)]
			\item $C$ is a Zariski-dense subgroup of $\GG(\mathbb{C})$; 
			and 
			\item $\GG(\mathbb{Q})$ is a Zariski-dense subgroup of $\GG(k)$.
		\end{enumerate}
	\end{thm}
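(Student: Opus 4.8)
The plan is to prove the two density statements separately, as they use quite different techniques.

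For part (a), the claim is that the compact group $C = \GG(\mathbb{C}) \cap \UU_t(\mathbb{C})$ is Zariski-dense in the complex algebraic group $\GG(\mathbb{C})$. I would handle this case-by-case but along a uniform strategy. The key idea is that the Zariski closure $\overline{C}$ is itself a Zariski-closed subgroup of $\GG(\mathbb{C})$, and I want to show it is everything. One clean approach is via Lie algebras: since $C$ is a compact real form of the complex reductive group $\GG(\mathbb{C})$, its complexified Lie algebra $\mathfrak{c} \otimes_{\mathbb{R}} \mathbb{C}$ equals the full Lie algebra $\mathfrak{g}$ of $\GG(\mathbb{C})$. Because $\overline{C}$ is a closed subgroup containing $C$, its Lie algebra contains $\mathfrak{c}$; but the Zariski closure is a complex algebraic subgroup, so its Lie algebra is a complex subspace containing $\mathfrak{c}$, hence all of $\mathfrak{g}$. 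Connectedness of $\GG(\mathbb{C})$ (or of each component, handled via the $\OO_t$ determinant issue separately) then forces $\overline{C} = \GG(\mathbb{C})$. Concretely, I would verify the dimension count: $\dim_{\mathbb{R}} C = \dim_{\mathbb{C}} \GG(\mathbb{C})$ in each of the four cases, which is the standard statement that $\SU_n, \UU_n, \OO_n(\mathbb{R}), \SpU_n$ are compact real forms.

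For part (b), the claim is that $\GG(\mathbb{Q})$ is Zariski-dense in $\GG(k)$ for an arbitrary characteristic-zero field $k$. Here the relevant point is that each $\GG$ is defined over $\mathbb{Q}$ by polynomial equations with integer coefficients, and these groups are \emph{rational} varieties — they admit a dense open subset parametrised by affine space over the prime field. The cleanest route is to exhibit, for each group, an explicit birational parametrisation defined over $\mathbb{Q}$: for $\GL_t$ and $\SL_t$ one can use row-reduction / Cayley-type parametrisations, and for $\OO_t$ and $\Sp_t$ the Cayley transform $M = (I - A)(I + A)^{-1}$ sends (generic) antisymmetric resp.\ $\Omega$-compatible matrices $A$ to group elements, giving a dominant map from affine space. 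Since $\mathbb{Q}$-points are Zariski-dense in $\mathbb{A}^N_{\mathbb{Q}}$ (as $\mathbb{Q}$ is infinite), their image is dense in the open set, hence in $\GG$. One must take a little care that the parametrisation is defined over $\mathbb{Q}$ and that the $\mathbb{Q}$-points of the source hit the needed open locus, but this is routine once the map is written down.

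The main obstacle I anticipate is the disconnectedness of $\OO_t$: its determinant takes values $\pm 1$, so it has two components, and a Lie-algebra/connectedness argument only shows density in the identity component $\SO_t$. To handle this I would argue separately that $C = \OO_t(\mathbb{R})$ meets the non-identity component (it contains a reflection of determinant $-1$), so $\overline{C}$ meets both components of $\OO_t(\mathbb{C})$; since $\overline{C}$ is a group containing the dense-in-$\SO_t$ part and at least one element of the other component, it is all of $\OO_t(\mathbb{C})$. For part (b) the analogous subtlety is ensuring the Cayley transform covers enough of $\OO_t$ despite the same component issue, which I would again patch by noting that $\GG(\mathbb{Q})$ contains permutation/reflection matrices realising every component. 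I would also remark that an alternative, more structural proof of (b) uses the general fact that a connected algebraic group defined and split over an infinite field $k_0 \subseteq k$ has $k_0$-points dense in its $k$-points whenever it is $k_0$-rational, applied with $k_0 = \mathbb{Q}$.
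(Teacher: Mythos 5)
Your plan is correct in outline, but it takes a genuinely different route from the paper on both halves, so a comparison is worthwhile. For part (a), the paper only gives elementary arguments for $\GL$ and $\SL$: for $\UU_{n}(\mathbb{C}) \subset \GL_{n}(\mathbb{C})$ it shows the Zariski closure contains all diagonal matrices (using that $\mathbb{S}^{1}$ is infinite, via \Cref{lem:vanishing-on-product-infinite-sets}) and then invokes the decomposition $UDV$ of an arbitrary invertible matrix together with the fact that the closure of a subgroup is a subgroup; for $\SU_{n} \subset \SL_{n}$ it uses a retraction $\GL_{n} \to \SL_{n}$ (rescale the first row by $1/\det$) that preserves unitarity, via \Cref{lem:dense-intersection-dense}. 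For $\OO$ and $\Sp$ the paper simply cites Kraft. Your compact-real-form argument is essentially the unitarian trick that the cited source uses; it is more uniform, covers all four groups at once (including the two the paper does not treat explicitly), and your patch for the two components of $\OO_{t}(\mathbb{C})$ is the right one --- at the cost of invoking more machinery (Lie algebras of algebraic groups, $\mathbb{C}$-linearity of the tangent space of a Zariski-closed subgroup). One caution: the dimension count alone does not give $\mathfrak{c} \otimes_{\mathbb{R}} \mathbb{C} = \mathfrak{g}$; you also need $\mathfrak{c} \cap i\mathfrak{c} = 0$, which here amounts to the observation that a matrix that is both Hermitian and skew-Hermitian is zero.

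For part (b), the paper argues group by group: $\GL_{n}(\mathbb{Q})$ is dense because it is the intersection of the dense set $\mathbb{Q}^{n^{2}}$ with a Zariski-open set; $\SL_{n}$ follows from the same retraction lemma; $\OO_{n}$ uses Cartan--Dieudonn\'e (generation by the reflections $I - 2uu^{\tr}/u^{\tr}u$, a family rationally parametrised over $\mathbb{Q}$); and $\Sp_{2n}$ uses Dieudonn\'e's generators. Your rationality/Cayley-transform approach is sound and morally equivalent, but the step you pass over --- ``their image is dense in the open set, hence in $\GG$'' --- is exactly where the remaining work sits: the Cayley transform only reaches the locus $\det(I+M) \neq 0$, a \emph{proper} dense open subset of the identity component, and upgrading density there to density in all of $\GG(k)$ (in the $k$-Zariski topology on $k$-points) requires an additional input, e.g.\ geometric irreducibility of the identity component, or the standard fact that $U \cdot U = G$ for a nonempty open $U$ in a connected group, or a generation argument of the kind the paper uses. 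This is fixable but must be said. The paper's choice of generating sets sidesteps the issue entirely: once a generating family is rationally parametrised over $\mathbb{Q}$, its $k$-points all lie in the closed subgroup $\overline{\GG(\mathbb{Q})}$, and generation finishes the proof.
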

	\begin{proof} 
		For (a), see the proof of~\Cite[Anhang II, Satz 4]{KraftGeometrische}. 
		We give a more elementary proof for $\GL$ and $\SL$ in \Cref{sec:proof-density}, see \Cref{prop:U-GL-dense,prop:SU-SL-dense}. 
		We also prove (b) in \Cref{sec:proof-density}, see \Cref{thm:G-Q-dense-in-G-k}.
	\end{proof}

	By $k[Z]$, we will mean one of $k[Y]$ or $k[X, Y]$. 
	In either case, we have a rational action of $\GG(k)$ on $k[Z]$, as described in \Cref{sec:classical-group-actions}. 
	Note that $C$ is a compact Lie group, and the action of $\GG(\mathbb{C})$ on $\mathbb{C}[Z]$ restricts to a smooth action of $C$.
	We have the following group extensions.

	\begin{equation*} 
		\begin{tikzcd}
		\GG(k) &                                                  & \GG(\mathbb{C}) &                                                          \\
		     & \GG(\mathbb{Q}) \arrow[lu, no head] \arrow[ru, no head] &               & C \arrow[lu, no head]
		\end{tikzcd}
	\end{equation*}

	We will first show how the computation of the Reynolds operator for $\GG(k)$ reduces to that for $C$. 
	The key point is that the action is rational, and each inclusion above is Zariski-dense by \Cref{thm:density}.
	This reduction is useful because $C$ is a compact Lie group; thus, we have its Reynolds operator by \Cref{thm:reynolds-for-lie-group}. 

	\begin{prop} \label{prop:same-invariants-upon-field-extension}
		Let $f_{1}, \ldots, f_{n} \in \mathbb{Q}[Z]^{\GG(\mathbb{Q})}$ be generating invariants, that is, we have
		$\mathbb{Q}[Z]^{\GG(\mathbb{Q})} = \mathbb{Q}[f_{1}, \ldots, f_{n}]$. 
		Then, the equality $k[Z]^{\GG(k)} = k[f_{1}, \ldots, f_{n}]$ holds. 
		In particular, we have the inclusion $\mathbb{Q}[Z]^{\GG(\mathbb{Q})} \subset k[Z]^{\GG(k)}$ as subsets of $k[Z]$.
	\end{prop}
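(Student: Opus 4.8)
The plan is to factor both asserted equalities through the intermediate ring $k[Z]^{\GG(\mathbb{Q})}$ of $\GG(\mathbb{Q})$-invariants computed inside the \emph{large} ring $k[Z]$ (not $\mathbb{Q}[Z]$). Concretely, I would prove the chain
\[
k[Z]^{\GG(k)} = k[Z]^{\GG(\mathbb{Q})} = k[f_{1}, \ldots, f_{n}],
\]
where the first equality comes from Zariski density and the second from a linear-algebra descent over $\mathbb{Q}$. The proposition and its ``in particular'' clause then follow by reading off the endpoints.

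For the first equality, the inclusion $k[Z]^{\GG(k)} \subset k[Z]^{\GG(\mathbb{Q})}$ is immediate from $\GG(\mathbb{Q}) \subset \GG(k)$. For the reverse, I would fix $f \in k[Z]^{\GG(\mathbb{Q})}$ and consider its stabiliser $\{M \in \GG(k) : M(f) = f\}$. Because the action is rational, the orbit map $M \mapsto M(f)$ takes values in the finite-dimensional space of polynomials of degree at most $\deg(f)$ and is there given by regular functions in the entries of $M$; hence the stabiliser is Zariski-closed in $\GG(k)$. Since $\GG(\mathbb{Q})$ sits inside this closed set and is Zariski-dense in $\GG(k)$ by \Cref{thm:density}, the stabiliser must be all of $\GG(k)$, giving $f \in k[Z]^{\GG(k)}$.

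For the second equality, I would use $k[Z] = k \otimes_{\mathbb{Q}} \mathbb{Q}[Z]$ together with a fixed $\mathbb{Q}$-basis $\{e_{\alpha}\}$ of $k$ with $e_{0} = 1$, so that each $f \in k[Z]$ has a unique expansion $f = \sum_{\alpha} e_{\alpha} f_{\alpha}$ with $f_{\alpha} \in \mathbb{Q}[Z]$ and almost all $f_{\alpha} = 0$. Since every $g \in \GG(\mathbb{Q})$ acts by a $k$-algebra automorphism fixing $k$ pointwise, it commutes with this expansion, so $g(f) = \sum_{\alpha} e_{\alpha}\, g(f_{\alpha})$. Comparing coefficients shows $f$ is $\GG(\mathbb{Q})$-invariant if and only if every $f_{\alpha}$ lies in $\mathbb{Q}[Z]^{\GG(\mathbb{Q})} = \mathbb{Q}[f_{1}, \ldots, f_{n}]$, whence $k[Z]^{\GG(\mathbb{Q})} = k[f_{1}, \ldots, f_{n}]$; the reverse containment uses that each $f_{i}$, being $\GG(\mathbb{Q})$-invariant in $\mathbb{Q}[Z]$, stays invariant in $k[Z]$.

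The main obstacle is the closedness of the stabiliser, that is, verifying uniformly in $M$ that ``$M$ fixes $f$'' is a Zariski-closed condition. Everything else is formal: the descent step is pure linear algebra over $\mathbb{Q}$, and the density input is quoted from \Cref{thm:density}. The point requiring care is the dual action~\ref{item:standard-dual-action}, where $M^{-1}$ appears; there I would note that the entries of $M^{-1}$ are regular functions on $\GG(k)$, being $\adj(M)/\det(M)$ with $\det(M)$ invertible, so that after clearing denominators the stabiliser condition remains polynomial and the closedness argument goes through unchanged.
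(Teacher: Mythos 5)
Your proposal is correct and uses the same two ingredients as the paper's proof: Zariski density of $\GG(\mathbb{Q})$ in $\GG(k)$ to upgrade $\GG(\mathbb{Q})$-invariance to $\GG(k)$-invariance (the paper applies this just to the generators $f_{i}$, you to all of $k[Z]^{\GG(\mathbb{Q})}$ via the closed-stabiliser formulation), and the expansion over a $\mathbb{Q}$-basis of $k$ to descend to $\mathbb{Q}[Z]^{\GG(\mathbb{Q})} = \mathbb{Q}[f_{1}, \ldots, f_{n}]$. Your factoring through the intermediate ring $k[Z]^{\GG(\mathbb{Q})}$ and your explicit remark on handling $M^{-1}$ in the dual action are only minor repackagings of the paper's argument, not a different route.
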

	\begin{proof} 
		We first show that each $f_{i}$ is $\GG(k)$-invariant. 
		To this end, note that the equation
		\begin{equation*}
			\sigma(f_{i}) - f_{i} = 0
		\end{equation*}
		holds for each fixed $i$ and for all $\sigma \in \GG(\mathbb{Q})$. 
		Because the action is rational and $\GG(\mathbb{Q})$ is Zariski-dense in $\GG(k)$ by \Cref{thm:G-Q-dense-in-G-k}, the above equation must hold for all $\sigma \in \GG(k)$. 
		In other words, each $f_{i}$ is $\GG(k)$-invariant.

		We now prove the inclusion $k[Z]^{\GG(k)} \subset k[f_{1}, \ldots, f_{n}]$. 
		Let $B$ be a $\mathbb{Q}$-basis for $k$. 
		Given $h \in k[Z]^{\GG(k)}$, write 
		\begin{equation*} 
			h = \sum_{b \in B} b h_{b}
		\end{equation*}
		for $h_{b} \in \mathbb{Q}[Z]$. 
		If we apply $\sigma \in \GG(\mathbb{Q})$ to the above equation, we get
		\begin{equation*} 
			h = \sum_{b \in B} b \sigma(h_{b})
		\end{equation*}
		because $\sigma(h) = h$ and $\sigma(b) = b$ for all $b \in k$. 
		Comparing the two displayed equations above gives us that each $h_{b}$ is fixed by $\GG(\mathbb{Q})$ and thus
		$h_{b} \in \mathbb{Q}[f_{1}, \ldots, f_{n}]$ for all $b$. 
		In turn, $h \in k[f_{1}, \ldots, f_{n}]$, as desired.
	\end{proof}

	\begin{prop}
		Let $\mathcal{R}_{k} \colon k[Z] \onto k[Z]^{\GG(k)}$ denote the Reynolds operator over the field $k$. 
		The following diagram commutes
		\begin{equation*} 
			\begin{tikzcd}
			{k[Z]} \arrow[r, "\mathcal{R}_{k}", two heads]                                    & {k[Z]^{\GG(k)}}                                   \\
			{\mathbb{Q}[Z]} \arrow[r, "\mathcal{R}_{\mathbb{Q}}"', two heads] \arrow[u, hook] & {\mathbb{Q}[Z]^{\GG(\mathbb{Q})}}. \arrow[u, hook]
			\end{tikzcd}
		\end{equation*}
		In particular, if $\mu \in k[Z]$ is a monomial, then 
		\begin{equation} \label{eq:R-k-mu-R-C-mu}
			\mathcal{R}_{k}(\mu) = \mathcal{R}_{\mathbb{C}}(\mu).
		\end{equation}
	\end{prop}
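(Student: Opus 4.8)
The plan is to first establish the commutativity of the square and then deduce the monomial identity as a special case. The key tool is the uniqueness of the Reynolds operator for linearly reductive groups (\Cref{thm:linearly-reductive-reynolds-unique-linear}), which applies because $\GG(k)$ is linearly reductive in characteristic zero. Concretely, I would build a candidate operator over $k$ out of $\mathcal{R}_{\mathbb{Q}}$, verify that it satisfies all the defining properties of a Reynolds operator, and then let uniqueness force it to coincide with $\mathcal{R}_{k}$; reading off its behaviour on $\mathbb{Q}[Z]$ then gives the diagram.

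Since $k[Z] = \mathbb{Q}[Z] \otimes_{\mathbb{Q}} k$, set $\widetilde{\mathcal{R}} \coloneqq \mathcal{R}_{\mathbb{Q}} \otimes_{\mathbb{Q}} \mathrm{id}_{k} \colon k[Z] \to k[Z]$. This is manifestly $k$-linear and degree-preserving. To see it is a splitting onto the invariants, I would invoke \Cref{prop:same-invariants-upon-field-extension}: because a generating set of $\mathbb{Q}[Z]^{\GG(\mathbb{Q})}$ remains a generating set of $k[Z]^{\GG(k)}$, and those generators have rational coefficients, we get $k[Z]^{\GG(k)} = \mathbb{Q}[Z]^{\GG(\mathbb{Q})} \otimes_{\mathbb{Q}} k$. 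Hence the image of $\widetilde{\mathcal{R}}$ is exactly $k[Z]^{\GG(k)}$, and $\widetilde{\mathcal{R}}$ restricts to the identity there because $\mathcal{R}_{\mathbb{Q}}$ does on $\mathbb{Q}[Z]^{\GG(\mathbb{Q})}$.

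The crux is upgrading the $\GG(\mathbb{Q})$-equivariance of $\mathcal{R}_{\mathbb{Q}}$ to $\GG(k)$-equivariance of $\widetilde{\mathcal{R}}$. Since $\widetilde{\mathcal{R}}$ is degree-preserving, it suffices to work on each finite-dimensional graded piece $[k[Z]]_{d}$, where both $\widetilde{\mathcal{R}}$ and the action are honest linear maps. Writing $\mu_{\sigma}$ for the action of a group element $\sigma$, the identity $\widetilde{\mathcal{R}} \circ \mu_{\sigma} = \widetilde{\mathcal{R}}$ holds for every $\sigma \in \GG(\mathbb{Q})$. In a fixed $k$-basis of $[k[Z]]_{d}$, the entries of $\widetilde{\mathcal{R}} \circ \mu_{\sigma} - \widetilde{\mathcal{R}}$ are rational functions of the entries of $\sigma$ that are regular on $\GG$, the action being rational. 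These vanish on the Zariski-dense subgroup $\GG(\mathbb{Q})$ by \Cref{thm:density}, hence vanish identically, exactly as in the proof of \Cref{prop:same-invariants-upon-field-extension}. This yields $\GG(k)$-equivariance. With all three properties in hand, uniqueness gives $\widetilde{\mathcal{R}} = \mathcal{R}_{k}$, so $\mathcal{R}_{k}$ agrees with $\mathcal{R}_{\mathbb{Q}}$ on $\mathbb{Q}[Z]$ and lands in $\mathbb{Q}[Z]^{\GG(\mathbb{Q})}$ there; this is precisely the commutativity of the square.

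Finally, the monomial identity follows by applying the square twice. A monomial $\mu$ has coefficient $1$, so $\mu \in \mathbb{Q}[Z]$. Taking the extensions $\mathbb{Q} \subset k$ and $\mathbb{Q} \subset \mathbb{C}$ in turn, commutativity gives $\mathcal{R}_{k}(\mu) = \mathcal{R}_{\mathbb{Q}}(\mu)$ and $\mathcal{R}_{\mathbb{C}}(\mu) = \mathcal{R}_{\mathbb{Q}}(\mu)$, whence $\mathcal{R}_{k}(\mu) = \mathcal{R}_{\mathbb{C}}(\mu)$. The main obstacle is the equivariance-extension step; the rest is bookkeeping with base change, but one must check carefully that the rational action really makes equivariance a Zariski-closed condition on $\GG$, including for the dual action of \ref{item:standard-dual-action} where the regular-but-not-polynomial term $M^{-1}$ appears.
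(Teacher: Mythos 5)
Your proposal is correct and follows essentially the same route as the paper: extend $\mathcal{R}_{\mathbb{Q}}$ $k$-linearly using \Cref{prop:same-invariants-upon-field-extension}, upgrade equivariance from $\GG(\mathbb{Q})$ to $\GG(k)$ via Zariski-density, and invoke the uniqueness in \Cref{thm:linearly-reductive-reynolds-unique-linear}; the monomial identity is then obtained by running the square for both $k$ and $\mathbb{C}$. You simply spell out the equivariance-extension step in more detail (graded pieces, regular functions vanishing on a dense subgroup) than the paper does.
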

	The above equation makes sense by interpreting $\mu$ as an element of $\mathbb{C}[Z]$. 

	\begin{proof} 
		In view of \Cref{prop:same-invariants-upon-field-extension}, 
		we may extend $\mathcal{R}_{\mathbb{Q}}$ $k$-linearly to obtain a retraction $\pi$ making the diagram
		\begin{equation*} 
			\begin{tikzcd}
			{k[Z]} \arrow[r, "\pi", two heads]                                    & {k[Z]^{\GG(k)}}                                   \\
			{\mathbb{Q}[Z]} \arrow[r, "\mathcal{R}_{\mathbb{Q}}"', two heads] \arrow[u, hook] & {\mathbb{Q}[Z]^{\GG(\mathbb{Q})}}. \arrow[u, hook]
			\end{tikzcd}
		\end{equation*}
		commute. 
		We need to show that $\pi = \mathcal{R}_{k}$. 
		By the uniqueness of the Reynolds operator, \Cref{thm:linearly-reductive-reynolds-unique-linear}, it suffices to show that $\pi$ is $\GG(k)$-equivariant. 
		Note that $\GG(k)$-equivariance can be checked on monomials, 
		where it is true again by the Zariski-density of $\GG(\mathbb{Q})$. 
		This proves that the diagram commutes.
		Now, if $\mu \in \mathbb{Q}[Y]$ is a monomial, then the diagram gives us $\mathcal{R}_{k}(\mu) = \mathcal{R}_{\mathbb{Q}}(\mu)$. 
		Because $k$ was arbitrary, we get~\Cref{eq:R-k-mu-R-C-mu}.
	\end{proof}

	The Zariski-density of $C$ in $\GG(\mathbb{C})$ similarly yields the following proposition.
	\begin{prop} \label{prop:invariants-and-operator-over-GC-and-intersection}
		The equality 
		$\mathbb{C}[Z]^{\GG(\mathbb{C})} 
		= 
		\mathbb{C}[Z]^{C}$ 
		holds, and 
		the splitting 
		$\mathcal{R} \colon \mathbb{C}[Z] \to \mathbb{C}[Z]^{C}$ 
		described in \Cref{thm:reynolds-for-lie-group} 
		is $\GG(\mathbb{C})$-equivariant. 
		In other words, $\mathcal{R}$ is the Reynolds operator for the $\GG(\mathbb{C})$-action, by \Cref{thm:linearly-reductive-reynolds-unique-linear}.
	\end{prop}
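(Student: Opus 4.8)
The plan is to prove both assertions with a single device: a regular function on $\GG(\mathbb{C})$ that vanishes on the Zariski-dense subgroup $C$ must vanish identically, where the density of $C$ in $\GG(\mathbb{C})$ is exactly \Cref{thm:density}(a). I would first establish the equality of invariant rings. The inclusion $\mathbb{C}[Z]^{\GG(\mathbb{C})} \subset \mathbb{C}[Z]^{C}$ is immediate from $C \subset \GG(\mathbb{C})$. For the reverse, fix $f \in \mathbb{C}[Z]^{C}$. The orbit map $g \mapsto g(f)$ takes values in the finite-dimensional subspace $W$ of polynomials of degree at most $\deg(f)$, and, because the action is rational, it is a morphism of varieties $\GG(\mathbb{C}) \to W$. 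After choosing a basis of $W$, the map $g \mapsto g(f) - f$ has regular coordinate functions, each of which vanishes on $C$; by density they vanish on all of $\GG(\mathbb{C})$, so $g(f) = f$ for every $g \in \GG(\mathbb{C})$, that is, $f \in \mathbb{C}[Z]^{\GG(\mathbb{C})}$.

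Next I would show that the splitting $\mathcal{R}$ of \Cref{thm:reynolds-for-lie-group} is $\GG(\mathbb{C})$-equivariant. By that theorem $\mathcal{R}$ is $C$-equivariant, so $\mathcal{R}(g(f)) = \mathcal{R}(f)$ for all $g \in C$, and the task is to upgrade this to $\GG(\mathbb{C})$. By $\mathbb{C}$-linearity it suffices to treat a homogeneous $f$, say of degree $d$. Then $[S]_{d}$ is a finite-dimensional $\GG(\mathbb{C})$-stable subspace, the orbit map $g \mapsto g(f)$ is a morphism into $[S]_{d}$, and $\mathcal{R}$ restricts to a $\mathbb{C}$-linear endomorphism of $[S]_{d}$ because it is degree-preserving. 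Hence $g \mapsto \mathcal{R}(g(f))$ is again a morphism, so $g \mapsto \mathcal{R}(g(f)) - \mathcal{R}(f)$ is a regular function vanishing on the dense set $C$, and is therefore identically zero.

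To assemble the conclusion: by \Cref{thm:reynolds-for-lie-group}, $\mathcal{R}$ is a $\mathbb{C}$-linear splitting, and by the first step its image is $\mathbb{C}[Z]^{C} = \mathbb{C}[Z]^{\GG(\mathbb{C})}$, so it is a $\mathbb{C}$-linear splitting of the inclusion $\mathbb{C}[Z]^{\GG(\mathbb{C})} \into \mathbb{C}[Z]$; by the second step it is $\GG(\mathbb{C})$-equivariant. Thus $\mathcal{R}$ is a Reynolds operator for the $\GG(\mathbb{C})$-action in the sense of \Cref{defn:reynolds-operator}, and since $\GG(\mathbb{C})$ is linearly reductive in characteristic zero, the uniqueness in \Cref{thm:linearly-reductive-reynolds-unique-linear} identifies it as \emph{the} Reynolds operator.

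The main obstacle is not any individual step but the technical point underlying both density arguments: that $g \mapsto g(f)$, and hence $g \mapsto \mathcal{R}(g(f))$, is genuinely a morphism of varieties rather than merely a smooth map, so that vanishing on a Zariski-dense subset forces vanishing everywhere. I would make this precise by isolating two observations — the orbit map is a morphism into a fixed finite-dimensional graded piece because the action is rational and degree-preserving, and $\mathcal{R}$ preserves that piece because it is degree-preserving and $\mathbb{C}$-linear — after which both claims reduce to the elementary fact that regular functions agreeing on a dense set agree everywhere.
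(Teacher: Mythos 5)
Your proposal is correct and follows essentially the same route the paper intends: the paper gives no separate proof, merely noting that ``the Zariski-density of $C$ in $\GG(\mathbb{C})$ similarly yields'' the proposition, i.e.\ the same rational-action-plus-density argument used for \Cref{prop:same-invariants-upon-field-extension}. Your write-up simply fills in the details the paper leaves implicit --- in particular the observation that the orbit map lands in a fixed finite-dimensional graded piece and that $\mathcal{R}$ preserves it, so that both claims reduce to a regular function vanishing on a Zariski-dense set.
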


	\begin{rem}
		The above has now made the computation of $\mathcal{R}_{k}$ clear: because the Reynolds operator $\mathcal{R}_{k}$ is a $k$-linear map,
		it suffices to compute it on monomials; 
		and for monomials, $\mathcal{R}_{k}$ agrees with the Reynolds operator for the Lie group $C$ by~\Cref{eq:R-k-mu-R-C-mu} and \Cref{prop:invariants-and-operator-over-GC-and-intersection}. 
	\end{rem}

	In the following two subsections, we describe algorithms to implement this splitting on a computer algebra system. 

	\subsection{Computing the Reynolds operator for copies of the standard representation} \label{subsec:standard-computation}
	
	Continuing our notation from earlier, 
	let $\GG(k) \le \GL_{t}(k)$ be one of the classical groups, 
	and $C \coloneqq \GG(\mathbb{C}) \cap \UU_{t}(\mathbb{C})$ the corresponding compact Lie group. 
	For a positive integer $n$, the group $\GG(k)$ acts on $k[Y_{t \times n}]$ as described in~\ref{item:standard-action}.
	We describe the Reynolds operator for this action. 
	Consider the larger polynomial ring $k[Y][U_{t \times t}]$, and define the $k$-algebra map
	\begin{align*} 
		\phi \colon k[Y] &\to k[Y][U] \\
						  Y &\mapsto UY.
	\end{align*}

	For $f \in k[Y]$, write
	\begin{equation*} 
		\phi(f) = \sum_{I} \alpha_{I}(f) u^{I},
	\end{equation*}
	where $\alpha_{I}(f) \in k[Y]$;
	in the above, 
	the sum is over multi-indices $I \in \mathbb{N}^{t^{2}}$,
	and $u^{I}$ is the corresponding monomial.
	Each $u^{I}$ can be naturally interpreted as a 
	smooth function $C \to \mathbb{C}$ and
	the Reynolds operator is then given as
	\begin{equation} \label{eq:reynolds-standard-representation}
		\begin{aligned} 
			\mathcal{R} \colon k[Y] &\to k[Y]^{\GG(k)} \\
			f &\mapsto \sum_{I} \alpha_{I}(f) \int_{C} u^{I}.
		\end{aligned}
	\end{equation}

	\subsection{Computing the Reynolds operator for copies of the standard and the dual representations} \label{subsec:standard-dual-computation}

	We now consider the action of $\GG(k)$ on $k[X_{m \times t}, Y_{t \times n}]$ as described in~\ref{item:standard-dual-action}. 
	Note that while the action of $\GG(k)$ involves an inverse, 
	$C$ is a subgroup of the unitary group and thus, $U^{-1} = \overline{U}^{\tr}$ for $U \in C$. 
	We now consider the larger polynomial ring $k[X, Y][U_{t \times t}, \overline{U}_{t \times t}]$ 
	with $2t^{2}$ additional indeterminates; 
	explicitly, the new variables are the symbols 
	${\{u_{ij} : 1 \le i, j \le n\} \cup \{\overline{u}_{ij} : 1 \le i, j \le n\}}$. 
	Define the $k$-algebra map 
	\begin{align*} 
		\phi \colon k[X, Y] &\to k[X, Y][U, \overline{U}] \\
		X &\mapsto X \overline{U}^{\tr}, \\
		Y &\mapsto U Y.
	\end{align*}

	For $f \in k[X, Y]$, write
	\begin{equation*} 
		\phi(f) = \sum_{I, J} \alpha_{I, J}(f) u^{I} \overline{u}^{J}.
	\end{equation*}
	Each monomial $u^{I} \overline{u}^{J}$ can again be interpreted as a smooth function on $C$ and the Reynolds operator is given as
	\begin{equation} \label{eq:reynolds-standard-dual-representation}
		\begin{aligned} 
			\mathcal{R} \colon k[X, Y] &\to k[X, Y]^{\GG(k)} \\
			f &\mapsto \sum_{I, J} \alpha_{I, J}(f) \int_{C} u^{I} \overline{u}^{J}.
		\end{aligned}
	\end{equation}

	\subsection{Some remarks} \label{subsec:remarks}

	We stress that the only non-algebraic calculations above are 
	the integrals of monomial functions over $C$,
	where $C$ is one of $\UU_{t}(\mathbb{C})$, $\SU_{t}(\mathbb{C})$, $\OO_{t}(\mathbb{R})$, or $\SpU_{t}(\mathbb{C})$. 
	Note moreover that these are scalar functions. 
	While we discussed the theory of integration of vector-valued functions to prove the above, 
	one only needs to work with $\mathbb{C}$-valued functions in practice. 

	The integration of these monomial functions over $\UU_{t}(\mathbb{C})$, $\OO_{t}(\mathbb{R})$, and $\SpU_{t}(\mathbb{C})$ is of interest in various fields of mathematical physics, see the introduction of~\Cite{GorinLopez}. 
	Methods to compute these integrals are described in~\Cite{CollinsSniady, GorinLopez}. 
	In particular, the integration of arbitrary monomial functions over $\UU_{t}(\mathbb{C})$ has been implemented in the \texttt{Mathematica} package \texttt{IntU}~\Cite{PuchalaMiszczak}. 
	Using this package, we have implemented the splitting~\Cref{eq:reynolds-standard-dual-representation} for the action~\ref{item:standard-dual-action} of $\GL_{t}(\mathbb{C})$ in the computer algebra system \Sage~\Cite{sagemath}. 
	We have also implemented the splitting~\Cref{eq:reynolds-standard-representation} for the action~\ref{item:standard-action} of $\SL_{2}(\mathbb{C})$ using \Cref{thm:integrating-over-SU2}.

	For $\SL_{t}(k)$ and $\OO_{t}(k)$, the method described in \Cref{subsec:standard-dual-computation} for 
	the action~\ref{item:standard-dual-action} may be modified as follows.
	\begin{enumerate}[label=(\alph*)]
		\item (Special linear group) If $C = \SL_{t}(\mathbb{C}) \cap \UU_{t}(\mathbb{C})$, then the inverse of $U \in C$ is given by the adjugate $\adj(U)$. 
		Note that the entries of $\adj(U)$ are polynomials in the entries of $U$, so we may modify $\phi$ as
		\begin{align*} 
			\phi \colon k[X, Y] &\to k[X, Y][U] \\
			X &\mapsto X \adj(U), \\
			Y &\mapsto U Y.
		\end{align*}
		\item (Orthogonal group) If $C = \OO_{t}(\mathbb{C}) \cap \UU_{t}(\mathbb{C})$, then the inverse of $U \in C$ is just the transpose $U^{\tr}$, so we may modify $\phi$ as
		\begin{align*} 
			\phi \colon k[X, Y] &\to k[X, Y][U] \\
			X &\mapsto X U^{\tr}, \\
			Y &\mapsto U Y.
		\end{align*}
	\end{enumerate}

\section{Explicit formulae} \label{sec:explicit-formulae}
	In this section, we use the formulae of \Cref{sec:reynolds-classical} to compute the Reynolds operators for $\SL_{2}$ and $\GL_{t}$. 
	We give expressions for these in terms of the invariants described in \Cref{thm:classical-invariants}. 

	\subsection{The Reynolds operator for \texorpdfstring{$\SL_{2}$}{SL2}}

	We use formula~\Cref{eq:reynolds-standard-representation} to
	compute the Reynolds operator~$\mathcal{R}$ for the standard action~\ref{item:standard-action} of $\SL_{2}(k)$ on $k[Y_{2 \times N}]$; 
	the	relevant monomial integrals are determined in \Cref{thm:integrating-over-SU2} and we can thus compute $\mathcal{R}$ on any element of $k[Y]$. 
	We begin the section by recording the value of $\mathcal{R}$ on various families of monomials, postponing the proofs until the end of the section. 
	By \Cref{thm:classical-invariants}, we know that $k[Y]^{\SL_{2}(k)}$ is generated by the size $2$ minors of $Y$. 
	For ease of notation, we write
	\begin{equation*} 
		Y = 
		\begin{bmatrix}
			a_{1} & a_{2} & \cdots & a_{N} \\
			b_{1} & b_{2} & \cdots & b_{N} \\
		\end{bmatrix}
		,
		\qquad
		\{\Delta\} \coloneqq \{\text{size $2$ minors of $Y$}\},
		\qquad
		\text{and}
		\qquad
		\Delta_{i, j} \coloneqq a_{i} b_{j} - a_{j} b_{i}.
	\end{equation*}

	The next theorem describes the Reynolds operator on $k[Y_{2 \times 2}]$.

	\begin{thm} \label{thm:reynolds-operator-SL-2-by-2}
		Let $\mathcal{R} \colon k[Y_{2 \times 2}] \to k[\{\Delta\}]$ 
		be the Reynolds operator and $\mu \in k[Y_{2 \times 2}]$ a monomial.
		\begin{enumerate}[leftmargin=*, label=(\alph*)]
			\item If $\mu$ is of the form $(a_{1} b_{2})^{n} (a_{2} b_{1})^{m}$ for some nonnegative integers $n$ and $m$, then
			\begin{equation} \label{eq:R-SL-2-2}
				\mathcal{R}(\mu) 
				= \mathcal{R}\left((a_{1} b_{2})^{n} (a_{2} b_{1})^{m}\right) 
				=	\frac{n! m!}{(n + m + 1)!} \Delta_{1,2}^{n} \Delta_{2,1}^{m};
			\end{equation}
			in particular, for $n \ge 0$, we have
			\begin{equation} \label{eq:R-SL-2-1}
				\mathcal{R}\left((a_{1} b_{2})^{n}\right) 
				= \frac{1}{n + 1}\Delta_{1,2}^{n}.
			\end{equation}
			\item If $\mu$ is not of the above form, then
			\begin{equation*} 
				\mathcal{R}(\mu) = 0.
			\end{equation*}
		\end{enumerate}
	\end{thm}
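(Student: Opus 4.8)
The plan is to apply the formula \Cref{eq:reynolds-standard-representation} for the standard action of $\SL_2$, for which the relevant compact group is $C = \SU_2(\mathbb{C})$. Writing $U = (u_{ij})$ for the matrix of indeterminates and $\phi\colon k[Y] \to k[Y][U]$, $Y \mapsto UY$, the operator reads $\mathcal{R}(\mu) = \sum_I \alpha_I(\mu)\int_C u^I$, where $\phi(\mu) = \sum_I \alpha_I(\mu) u^I$. Thus everything reduces to the monomial integrals $\int_{\SU_2} u^I$, supplied by \Cref{thm:integrating-over-SU2}; this Haar integral is the only nonalgebraic input, and I would organize the rest so as never to expand $\phi(\mu)$ in full.

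The organizing principle is a multigrading. First I would note that $k[Y_{2\times 2}]$ carries a $\mathbb{Z}^2$-grading by \emph{column degree}, the bidegree $(\deg_{a_1,b_1}, \deg_{a_2,b_2})$, and that the $\SL_2$-action preserves it, since $M\colon Y \mapsto MY$ recombines $a_j$ and $b_j$ only within each fixed column $j$. Hence $\mathcal{R}$, being an average of this action, is homogeneous for the column grading. By \Cref{thm:classical-invariants}(b) the target $k[\{\Delta\}] = k[\Delta_{1,2}]$ is a polynomial ring in the single generator $\Delta_{1,2}$ of column bidegree $(1,1)$, so its bidegree-$(d,d)$ component is the line $k\,\Delta_{1,2}^{d}$ and all other components vanish. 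For part (b) I would combine two constraints on $\mu = a_1^{p_1} a_2^{p_2} b_1^{q_1} b_2^{q_2}$: its column bidegree $(p_1+q_1,\,p_2+q_2)$ must be of the form $(d,d)$, forcing $p_1+q_1=p_2+q_2$; and applying equivariance to the torus $\diag(t,t^{-1}) \in \SL_2$ gives $t^{(p_1+p_2)-(q_1+q_2)}\mathcal{R}(\mu)=\mathcal{R}(\mu)$, forcing $\mathcal{R}(\mu)=0$ unless $p_1+p_2=q_1+q_2$. A one-line computation shows these two equalities together give $q_1=p_2$ and $q_2=p_1$, i.e. $\mu = (a_1 b_2)^{p_1}(a_2 b_1)^{p_2}$; every other monomial is killed.

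For part (a) the same grading collapses the problem to a single scalar: since $\mu = (a_1 b_2)^n(a_2 b_1)^m$ has column bidegree $(n+m, n+m)$, we must have $\mathcal{R}(\mu) = c\,\Delta_{1,2}^{n+m}$ for some $c \in k$, and it remains only to pin down $c$. I would do this by reading off the coefficient of the monomial $a_1^{n+m} b_2^{n+m}$ on both sides: it occurs in $\Delta_{1,2}^{n+m}$ with coefficient $1$, while in $\phi(\mu)$ the only way to produce $a_1^{n+m}b_2^{n+m}$ (no $a_2$, no $b_1$) is completely forced, yielding the single $u$-monomial $u_{11}^{n} u_{12}^{m} u_{21}^{m} u_{22}^{n}$ with coefficient $1$. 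Hence $c = \int_{\SU_2} u_{11}^{n} u_{12}^{m} u_{21}^{m} u_{22}^{n}$, which \Cref{thm:integrating-over-SU2} evaluates to $(-1)^m \tfrac{n!\,m!}{(n+m+1)!}$ (the sign reflecting $u_{21} = -\overline{u_{12}}$ on $\SU_2$). Rewriting $(-1)^m \Delta_{1,2}^{n+m} = \Delta_{1,2}^{n}\Delta_{2,1}^{m}$ through $\Delta_{2,1} = -\Delta_{1,2}$ then gives \Cref{eq:R-SL-2-2}, and \Cref{eq:R-SL-2-1} is the case $m=0$.

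The genuine obstacle is \Cref{thm:integrating-over-SU2} itself, the evaluation of Haar integrals of monomials over $\SU_2$, which I treat as a black box here; granting it, the multigrading reduction to a single coefficient is exactly what makes the remainder routine and sidesteps the unwieldy multinomial sum that a direct expansion of $\phi(\mu)$ would produce.
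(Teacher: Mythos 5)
Your argument is correct, and it takes a genuinely different route from the paper's. For part (a) the paper expands $\phi\left((a_{1}b_{2})^{n}\right)$ as a full multinomial sum, integrates term by term via \Cref{thm:integrating-over-SU2}, and recognises the result as $\frac{1}{n+1}(a_{1}b_{2}-a_{2}b_{1})^{n}$ by means of the combinatorial \Cref{identity:x-plus-y-multinomial}; the general case $(a_{1}b_{2})^{n}(a_{2}b_{1})^{m}$ is then reduced to $m=0$ using the $k[\{\Delta\}]$-linearity of $\mathcal{R}$ and the substitution $a_{2}b_{1}=a_{1}b_{2}+\Delta_{2,1}$, which costs a second identity (\Cref{identity:2}). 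Your column-bidegree observation sidesteps both identities: it pins $\mathcal{R}(\mu)$ down to the line $k\,\Delta_{1,2}^{n+m}$ a priori, and the scalar is extracted from the single forced $u$-monomial $u_{11}^{n}u_{12}^{m}u_{21}^{m}u_{22}^{n}$ --- one Haar integral instead of a sum. For part (b) the paper combines the torus argument (its \Cref{thm:row-unbalanced-in-kernel}, identical to your second constraint) with a separate ``balanced versus unbalanced monomial'' cancellation argument for monomials divisible by $a_{1}b_{1}$ or $a_{2}b_{2}$; your grading by column degree replaces that case analysis with a clean statement about the bigraded pieces of $k[\Delta_{1,2}]$, and the two linear conditions $p_{1}+q_{1}=p_{2}+q_{2}$ and $p_{1}+p_{2}=q_{1}+q_{2}$ do force $\mu=(a_{1}b_{2})^{p_{1}}(a_{2}b_{1})^{p_{2}}$ as you claim. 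Both proofs rest on the same nonalgebraic input, \Cref{thm:integrating-over-SU2}, and on the general formula \Cref{eq:reynolds-standard-representation}. The one thing worth noting is that your shortcut leans on the target $k[\{\Delta\}]$ being a polynomial ring in the \emph{single} generator $\Delta_{1,2}$, so that each column-bidegree piece is at most one-dimensional; for $k[Y_{2\times N}]$ with $N>2$ (the setting of \Cref{conj:2x3-formula}) several products of minors share a column bidegree and the reduction to one undetermined scalar no longer applies, which is presumably why the paper keeps the expansion-plus-identity computation and the $k[\{\Delta\}]$-linearity step as its working templates.
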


	We give $k[Y_{2 \times N}]$ a multi-grading by defining $\deg(a_{i}) = (1, 0)$ and $\deg(b_{i}) = (0, 1)$ for all $1 \le i \le N$.

	\begin{thm} \label{thm:row-unbalanced-in-kernel}
		Let $\mu \in k[Y]$ be a monomial such that $\deg(\mu) = (m, n)$ with $m \neq n$. 
		Then, $\mathcal{R}(\mu) = 0$.
	\end{thm}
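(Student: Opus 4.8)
The plan is to exploit the action of the diagonal torus inside $\SL_{2}(k)$, under which $\mu$ is an eigenvector with a nontrivial eigenvalue whenever $m \neq n$, and to combine this with the $\SL_{2}(k)$-equivariance of $\mathcal{R}$.

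First I would fix $\lambda \in k^{\times}$ and consider the element $g_{\lambda} \coloneqq \diag(\lambda, \lambda^{-1})$; since $\det(g_{\lambda}) = 1$, it indeed lies in $\SL_{2}(k)$. Under the action~\ref{item:standard-action}, $g_{\lambda}$ sends $Y \mapsto g_{\lambda} Y$, which scales the first row by $\lambda$ and the second row by $\lambda^{-1}$; concretely $a_{i} \mapsto \lambda a_{i}$ and $b_{i} \mapsto \lambda^{-1} b_{i}$. Because $\mu$ has multidegree $(m, n)$, it follows directly that $g_{\lambda}(\mu) = \lambda^{m - n} \mu$.

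Next I would invoke the defining properties of the Reynolds operator. As $k$ has characteristic zero, $\SL_{2}(k)$ is linearly reductive, so $\mathcal{R}$ is the unique Reynolds operator and is both $k$-linear and $\SL_{2}(k)$-equivariant by \Cref{thm:linearly-reductive-reynolds-unique-linear} and \Cref{defn:reynolds-operator}. Applying equivariance to $g_{\lambda}$ gives $\mathcal{R}(g_{\lambda}(\mu)) = \mathcal{R}(\mu)$, while $k$-linearity together with the computation above gives $\mathcal{R}(g_{\lambda}(\mu)) = \lambda^{m - n} \mathcal{R}(\mu)$. Equating the two expressions yields $(\lambda^{m - n} - 1)\, \mathcal{R}(\mu) = 0$ for every $\lambda \in k^{\times}$.

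Finally, since $m \neq n$ and $k$ contains $\mathbb{Q}$, I can choose a specific value, say $\lambda = 2$, for which $\lambda^{m - n} \neq 1$; the displayed identity then forces $\mathcal{R}(\mu) = 0$. There is essentially no obstacle here beyond securing such a $\lambda$, which is available precisely because the characteristic is zero. Conceptually, I would note that this argument is just the statement that $\mathcal{R}$ respects the weight-space decomposition for the diagonal torus: the monomial $\mu$ has torus-weight $m - n$, whereas every generator $\Delta_{i, j}$ is torus-invariant and so the entire invariant ring $k[\{\Delta\}]$ sits in weight $0$; hence a monomial of nonzero weight is necessarily sent to $0$.
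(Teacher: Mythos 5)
Your argument is correct and is essentially the paper's own proof: the paper takes the single element $\sigma = \operatorname{diag}(2, 2^{-1}) \in \SL_{2}(k)$, notes $\sigma(\mu) = 2^{m-n}\mu$, and concludes from equivariance that $\mathcal{R}(\mu) = 2^{m-n}\mathcal{R}(\mu) = 0$, which is exactly your computation specialized to $\lambda = 2$. The added remarks about the torus weight-space decomposition are a nice conceptual framing but do not change the substance.
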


	Computations suggest that~\Cref{eq:R-SL-2-2} generalises as follows.

	\begin{conj} \label{conj:2x3-formula}
		For all nonnegative integers $i$, $j$, $k$, we have
		\begin{equation*} 
			\mathcal{R}\left(
			(a_{1} b_{2})^{i}
			(a_{1} b_{3})^{j}
			(a_{2} b_{3})^{k}
			\right)
			=
			\frac{(i + j)! (k + j)!}{(i + j + k + 1)! j!}
			\Delta_{1, 2}^{i} \Delta_{1, 3}^{j} \Delta_{2, 3}^{k}.
		\end{equation*}
	\end{conj}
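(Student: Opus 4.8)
The plan is to reduce the computation to an integral over the compact group $C = \SU_{2}(\mathbb{C})$, use a multidegree argument to fix the \emph{shape} of the answer up to a single scalar, and then evaluate that scalar as a terminating hypergeometric sum. Write $d \coloneqq i + j + k$ and $\mu \coloneqq (a_{1}b_{2})^{i}(a_{1}b_{3})^{j}(a_{2}b_{3})^{k} = a_{1}^{i+j}a_{2}^{k}b_{2}^{i}b_{3}^{j+k}$ inside $k[Y_{2 \times 3}]$. By \Cref{thm:reynolds-for-lie-group} and \Cref{prop:invariants-and-operator-over-GC-and-intersection}, we have $\mathcal{R}(\mu) = \int_{C} \psi_{\mu}$. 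Since $\mathcal{R}$ preserves the multidegree recording the total degree in each column, $\mathcal{R}(\mu)$ is an invariant of column-multidegree $(i+j,\,i+k,\,j+k)$. By \Cref{thm:classical-invariants}, $k[Y_{2 \times 3}]^{\SL_{2}(k)} = k[\Delta_{1,2}, \Delta_{1,3}, \Delta_{2,3}]$, and for the $2 \times 3$ case there are no Pl\"ucker relations (the Grassmannian is all of $\mathbb{P}^{2}$), so these three minors are algebraically independent. As $\Delta_{1,2}^{a}\Delta_{1,3}^{b}\Delta_{2,3}^{c}$ has column-multidegree $(a+b,\,a+c,\,b+c)$, the system $a+b = i+j$, $a+c = i+k$, $b+c = j+k$ forces $(a,b,c) = (i,j,k)$ uniquely. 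Hence
\begin{equation*}
	\mathcal{R}(\mu) = \lambda\, \Delta_{1,2}^{i}\,\Delta_{1,3}^{j}\,\Delta_{2,3}^{k}
\end{equation*}
for a single scalar $\lambda$, and the entire content of the conjecture is the value of $\lambda$.

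To extract $\lambda$ I would compare the coefficient of the monomial $\mu$ itself on both sides. On the right, the coefficient of $\mu$ in $\Delta_{1,2}^{i}\Delta_{1,3}^{j}\Delta_{2,3}^{k}$ is exactly $1$: because $\mu$ contains neither $b_{1}$ nor $a_{3}$, the only surviving choice is to take $a_{1}b_{2}$ from each factor of $\Delta_{1,2}$, $a_{1}b_{3}$ from each factor of $\Delta_{1,3}$, and $a_{2}b_{3}$ from each factor of $\Delta_{2,3}$. On the left, coefficient extraction is a linear functional, so it commutes with the integral by \Cref{lem:integral-commute-linear-maps}; thus $\lambda$ equals the integral over $C$ of the coefficient of $\mu$ in $g(\mu)$.

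I would then compute that coefficient explicitly. Parametrising $U \in \SU_{2}(\mathbb{C})$ by $\alpha, \beta$ with $\md{\alpha}^{2} + \md{\beta}^{2} = 1$, the action~\ref{item:standard-action} sends $a_{\ell} \mapsto \alpha a_{\ell} + \beta b_{\ell}$ and $b_{\ell} \mapsto -\overline{\beta}\, a_{\ell} + \overline{\alpha}\, b_{\ell}$. Matching $\mu$ forces the all-$\alpha$ choice in $g(a_{1})^{i+j}$ (to avoid $b_{1}$) and the all-$\overline{\alpha}$ choice in $g(b_{3})^{j+k}$ (to avoid $a_{3}$); the only freedom lies in producing $a_{2}^{k}b_{2}^{i}$ out of $g(a_{2})^{k}\,g(b_{2})^{i}$, which is a single binomial convolution. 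This yields the coefficient
\begin{equation*}
	\sum_{p \ge 0} (-1)^{p} \binom{i}{p}\binom{k}{p}\, \md{\alpha}^{2(d - p)}\, \md{\beta}^{2p}.
\end{equation*}
Integrating term by term using the moment formula of \Cref{thm:integrating-over-SU2}, namely $\int_{C} \md{\alpha}^{2A}\md{\beta}^{2B} = \frac{A!\,B!}{(A + B + 1)!}$ with $A = d - p$ and $B = p$, gives
\begin{equation*}
	\lambda = \frac{1}{(d + 1)!} \sum_{p \ge 0} (-1)^{p} \binom{i}{p}\binom{k}{p}\, p!\,(d - p)!.
\end{equation*}

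The main obstacle is the evaluation of this alternating sum, and I expect it to be precisely a terminating Gauss summation. The summand ratio works out to $(p - i)(p - k)\big/\bigl((p + 1)(p - d)\bigr)$, so the sum equals $d! \cdot {}_{2}F_{1}(-i, -k; -d; 1)$, which the Chu--Vandermonde identity evaluates to $\frac{(i+j)!\,(k+j)!}{j!}$ after simplifying the Pochhammer symbols via $k - d = -(i+j)$ and $d - i = j + k$ (the series terminates before the negative parameter $-d$ produces a vanishing denominator, so the identity applies). Dividing by $(d+1)!$ gives exactly the claimed coefficient $\frac{(i+j)!(k+j)!}{(i+j+k+1)!\,j!}$. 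As consistency checks that the plan is on track, setting $k = 0$ or $i = 0$ collapses the sum to its single $p = 0$ term and recovers the $2 \times 2$ values of \Cref{thm:reynolds-operator-SL-2-by-2}.
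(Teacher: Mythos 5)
The paper offers no proof of \Cref{conj:2x3-formula} --- it is stated purely as a conjecture suggested by machine computation --- so there is nothing to compare your argument against; as far as I can check, what you have written is a complete and correct proof. Both structural steps are sound: the column multigrading is preserved by $\mathcal{R}$ because the $\SL_{2}$-action mixes variables only within columns, and since $k[Y_{2\times 3}]^{\SL_{2}(k)}$ is a polynomial ring on the three algebraically independent minors, the multidegree $(i+j,\, i+k,\, j+k)$ pins the answer down to a scalar multiple of $\Delta_{1,2}^{i}\Delta_{1,3}^{j}\Delta_{2,3}^{k}$. The coefficient extraction is also right: the coefficient of $\mu$ in the minor product is $1$ because $\mu$ avoids $b_{1}$ and $a_{3}$, your expression for the coefficient of $\mu$ in $g(\mu)$ is correct (the exponents of $\alpha$ and $\overline{\alpha}$ automatically coincide, which is why everything is a polynomial in $\md{\alpha}^{2}$ and $\md{\beta}^{2}$), and \Cref{thm:integrating-over-SU2} does give $\int_{C}\md{\alpha}^{2A}\md{\beta}^{2B}=A!\,B!/(A+B+1)!$ once one accounts for $u_{21}=-\overline{u_{12}}$. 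The one step a careful reader will want spelled out is the terminating sum; rather than invoking Chu--Vandermonde at the negative lower parameter $-d$, it is cleaner to observe that
\begin{equation*}
\sum_{p}(-1)^{p}\binom{i}{p}\binom{k}{p}\,p!\,(d-p)!
= i!\,(d-i)!\sum_{p}(-1)^{p}\binom{k}{p}\binom{d-p}{i-p}
= i!\,(d-i)!\binom{d-k}{i}
= \frac{(d-i)!\,(d-k)!}{(d-i-k)!},
\end{equation*}
where the middle equality follows by extracting the coefficient of $x^{i}$ from the identity $\sum_{p}\binom{k}{p}(-x)^{p}(1+x)^{d-p}=(1+x)^{d-k}$; since $d-i=j+k$, $d-k=i+j$, and $d-i-k=j$, this is $(i+j)!\,(k+j)!/j!$, and dividing by $(d+1)!$ gives exactly the conjectured coefficient. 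Your argument therefore settles the conjecture (and, via the remark following it, makes the $n=0$ case of \Cref{conj:odd-powers-in-kernel} unconditional); I would encourage you to write it up as a theorem.
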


	\begin{conj} \label{conj:odd-powers-in-kernel}
		For all nonnegative integers $n$, we have
		\begin{equation*} 
			\mathcal{R}\left((a_{1} a_{2} a_{3} b_{1} b_{2} b_{3})^{2n + 1}\right) = 0.
		\end{equation*}
	\end{conj}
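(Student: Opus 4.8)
The plan is to avoid the monomial integrals entirely and instead deduce the vanishing from a single symmetry together with the $G$-equivariance of the Reynolds operator. The principle I would use is this: if some $g \in \SL_{2}(k)$ acts on a monomial $\mu$ by a scalar, say $g(\mu) = c\mu$ with $c \in k$, then $G$-equivariance and $k$-linearity of $\mathcal{R}$ give
\[
	\mathcal{R}(\mu) = \mathcal{R}(g(\mu)) = \mathcal{R}(c\mu) = c\,\mathcal{R}(\mu),
\]
so that $(c - 1)\mathcal{R}(\mu) = 0$. Hence $\mathcal{R}(\mu) = 0$ as soon as we can exhibit such a $g$ with $c \neq 1$.

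For the monomial at hand I would take $g$ to be the Weyl-group representative
\[
	J \coloneqq \begin{pmatrix} 0 & 1 \\ -1 & 0 \end{pmatrix} \in \SL_{2}(k).
\]
Under the action~\ref{item:standard-action} the $j$-th column $(a_{j}, b_{j})^{\tr}$ of $Y$ is sent to $J(a_{j}, b_{j})^{\tr} = (b_{j}, -a_{j})^{\tr}$, so $J$ realises the substitution $a_{i} \mapsto b_{i}$ and $b_{i} \mapsto -a_{i}$. Writing $p \coloneqq 2n + 1$, I would then compute
\[
	J\bigl((a_{1} a_{2} a_{3} b_{1} b_{2} b_{3})^{p}\bigr)
	= b_{1}^{p} b_{2}^{p} b_{3}^{p} (-a_{1})^{p} (-a_{2})^{p} (-a_{3})^{p}
	= (-1)^{3p} (a_{1} a_{2} a_{3} b_{1} b_{2} b_{3})^{p}.
\]
Since $p$ is odd, $3p$ is odd and the scalar is $c = (-1)^{3p} = -1$. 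Applying the principle above yields $-2\,\mathcal{R}(\mu) = 0$, and as $\chr(k) = 0$ we conclude $\mathcal{R}(\mu) = 0$.

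There is no real obstacle here; the only point worth flagging is that the odd-exponent hypothesis is exactly what the argument needs. For an even exponent the same computation gives $c = (-1)^{3 \cdot 2n} = +1$, so $J$ provides no information, consistent with the fact that $\mathcal{R}$ does not vanish on such monomials in general. Finally, I would remark that the same principle reproves~\Cref{thm:row-unbalanced-in-kernel}: the diagonal element $\diag(t, t^{-1}) \in \SL_{2}(k)$ scales a monomial of multidegree $(m, n)$ by $t^{m - n}$, which differs from $1$ for a suitable choice of $t$ precisely when $m \neq n$, again forcing $\mathcal{R}(\mu) = 0$.
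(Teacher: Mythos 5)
Your argument is correct, and --- importantly --- the paper does not prove this statement at all: it is posed as a conjecture, and only the case $n = 0$ is verified, in a remark, via an explicit computation that itself rests on two checked instances of \Cref{conj:2x3-formula}. Your symmetry argument settles the conjecture completely. Every step checks out: $J = \left(\begin{smallmatrix} 0 & 1 \\ -1 & 0 \end{smallmatrix}\right)$ lies in $\SL_{2}(k)$; the action~\ref{item:standard-action} sends $a_{j} = y_{1j} \mapsto b_{j}$ and $b_{j} = y_{2j} \mapsto -a_{j}$, hence $J(\mu) = (-1)^{3(2n+1)}\mu = -\mu$ for $\mu = (a_{1}a_{2}a_{3}b_{1}b_{2}b_{3})^{2n+1}$; and together with the $k$-linearity and $G$-equivariance of $\mathcal{R}$ (\Cref{thm:linearly-reductive-reynolds-unique-linear}) and $\chr(k) = 0$, this forces $\mathcal{R}(\mu) = 0$. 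The mechanism is exactly the one the paper uses to prove \Cref{thm:row-unbalanced-in-kernel} with $\diag(2, 2^{-1})$, only with a different group element, so it fits naturally into the paper's toolkit; what it buys over the paper's (absent) approach is that it needs no monomial integrals over $\SU_{2}(\mathbb{C})$ and in fact yields the more general statement that $\mathcal{R}\bigl(\prod_{i}(a_{i}b_{i})^{\gamma_{i}}\bigr) = 0$ whenever $\sum_{i}\gamma_{i}$ is odd, for any number of columns $N$. You should state that generalisation explicitly, and note that your argument promotes \Cref{conj:odd-powers-in-kernel} from a conjecture to a theorem; your closing observations (that even exponents give no information, and that the same principle reproves \Cref{thm:row-unbalanced-in-kernel}) are both accurate.
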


	\begin{thm} \label{thm:integrating-over-SU2}
		For all nonnegative integers $a$, $b$, $c$, $d$, we have
		\begin{equation*} 
			\int_{\SU_{2}(\mathbb{C})} u_{11}^{a} u_{12}^{b} u_{21}^{c} u_{22}^{d} = 
			\begin{cases}
				(-1)^{b} \dfrac{a! b!}{(a + b + 1)!} & \text{if $a = d$ and $b = c$}, \\[3pt]
				0 & \text{else}.
			\end{cases}
		\end{equation*}
	\end{thm}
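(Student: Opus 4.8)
The plan is to realize $\SU_{2}(\mathbb{C})$ explicitly as the unit $3$-sphere, use the bi-invariance of the Haar measure to dispose of the vanishing case by a torus-symmetry argument, and then evaluate the single surviving one-dimensional integral via the Beta function. First I would record the standard parametrization: every $U \in \SU_{2}(\mathbb{C})$ has the form
\[
	U = \begin{pmatrix} \alpha & \beta \\ -\overline{\beta} & \overline{\alpha} \end{pmatrix}, \qquad \md{\alpha}^{2} + \md{\beta}^{2} = 1,
\]
so that $u_{11} = \alpha$, $u_{12} = \beta$, $u_{21} = -\overline{\beta}$, $u_{22} = \overline{\alpha}$, and $U \mapsto (\alpha, \beta)$ identifies $\SU_{2}(\mathbb{C})$ with the unit sphere $S^{3} \subset \mathbb{C}^{2} \cong \mathbb{R}^{4}$. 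Under this identification the integrand rewrites as
\[
	u_{11}^{a} u_{12}^{b} u_{21}^{c} u_{22}^{d} = (-1)^{c}\, \alpha^{a} \overline{\alpha}^{d}\, \beta^{b} \overline{\beta}^{c}.
\]
Identifying $\mathbb{R}^{4}$ with the quaternions, left and right multiplication by a fixed $U$ restrict to $S^{3}$ from orthogonal transformations of $\mathbb{R}^{4}$; hence the uniform probability measure on $S^{3}$ is both left- and right-invariant and, by uniqueness, coincides with the Haar measure.

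Next I would establish the vanishing ``else'' case using invariance under the diagonal torus. Replacing $U$ by $\diag(e^{i\theta}, e^{-i\theta})\, U$ multiplies the monomial by $e^{i\theta(a + b - c - d)}$, while replacing $U$ by $U\,\diag(e^{i\phi}, e^{-i\phi})$ multiplies it by $e^{i\phi(a - b + c - d)}$. Since the integral is unchanged under these translations by \Cref{lem:invariance-to-vector-space} (applied with $V = \mathbb{C}$), it must vanish unless both $a + b = c + d$ and $a + c = b + d$, and these two equations together force $a = d$ and $b = c$. This settles every case except $a = d$, $b = c$.

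In the remaining case the integrand equals $(-1)^{b}\, \md{\alpha}^{2a}\,\md{\beta}^{2b}$, so it suffices to compute $\int_{S^{3}} \md{\alpha}^{2a}\,\md{\beta}^{2b}$. Introducing Hopf coordinates $\alpha = \cos\psi\, e^{i\phi_{1}}$ and $\beta = \sin\psi\, e^{i\phi_{2}}$ with $\psi \in [0, \pi/2]$ and $\phi_{1}, \phi_{2} \in [0, 2\pi)$, the normalized measure becomes $\tfrac{1}{2\pi^{2}}\cos\psi\sin\psi\, d\psi\, d\phi_{1}\, d\phi_{2}$. The two angular integrations contribute a factor $(2\pi)^{2}$, reducing the problem to the Beta integral
\[
	\int_{S^{3}} \md{\alpha}^{2a}\,\md{\beta}^{2b} = 2 \int_{0}^{\pi/2} \cos^{2a + 1}\psi\, \sin^{2b + 1}\psi\, d\psi = \frac{\Gamma(a + 1)\,\Gamma(b + 1)}{\Gamma(a + b + 2)} = \frac{a!\, b!}{(a + b + 1)!}.
\]
Multiplying by the sign $(-1)^{b}$ obtained above yields the stated formula.

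The main obstacle is the bookkeeping in the first step: correctly pinning down the Haar measure as the round measure on $S^{3}$ and getting the normalizing constant $\tfrac{1}{2\pi^{2}}$ and the Jacobian $\cos\psi\sin\psi$ right. Once the measure is fixed, the torus symmetry makes the vanishing immediate and the surviving one-variable integral is routine.
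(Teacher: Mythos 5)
Your proof is correct, and it reaches the same final Beta integral as the paper, but by a somewhat different route that is worth comparing. The paper quotes an explicit Euler-angle formula for the Haar measure on $\SU_{2}(\mathbb{C})$ from Faraut's book, substitutes the monomial into those coordinates, and obtains the vanishing conditions $a + b = c + d$ and $a + c = b + d$ by directly computing the two oscillatory angular integrals via Fubini; the surviving $\theta$-integral is then exactly your $2\int_{0}^{\pi/2}\cos^{2a+1}\psi\sin^{2b+1}\psi\,{\mathrm{d}}\psi$ (the paper writes it with $\sin(2\theta)$ and evaluates it in \Cref{identity:integrate-cos-sin}). You instead make the argument self-contained in two places: you identify the Haar measure with the round probability measure on $S^{3}$ via the quaternion picture and uniqueness of Haar measure, rather than citing a formula; and you dispose of the ``else'' case by translation invariance under the diagonal torus (correctly invoking \Cref{lem:invariance-to-vector-space}), which is cleaner and more conceptual than integrating the phases explicitly. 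Your Hopf coordinates are just a linear change of angular variables from the paper's Euler angles, and your Jacobian, normalization $\tfrac{1}{2\pi^{2}}$, sign $(-1)^{b}$, and Beta evaluation all check out. The trade-off is that the paper's approach front-loads all the analysis into one cited measure formula and then is purely computational, whereas yours requires justifying the $S^{3}$ identification but rewards you with a symmetry argument that generalises readily to other compact groups.
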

	\begin{proof}
		See \Cref{thm:integrating-over-SU2-appendix}.
	\end{proof}

	We say that a monomial in $k[Y]$ is \deff{balanced} if it is a product of monomials of the form $a_{i} b_{j}$ with $i \neq j$, and \deff{unbalanced} otherwise. 
	The following are straightforward observations.
	\begin{enumerate}[label=(\alph*)]
		\item The algebra of minors $k[\{\Delta\}]$ sits inside the $k$-subalgebra generated by the balanced monomials.
		\item If $\mu \in k[Y]$ is a balanced monomial, then $\deg(\mu) = (d, d)$ for some $d \ge 0$.
	\end{enumerate}
	Note however that $\deg(a_{1} b_{1}) = (1, 1)$, yet $a_{1} b_{1}$ is unbalanced. 

	\begin{rem}
		Assuming \Cref{conj:2x3-formula}, the $k[\{\Delta\}]$-linearity of $\mathcal{R}$
		would then determine the value of $\mathcal{R}$ on any balanced monomial in $k[Y_{2 \times 3}]$.
		For example, one may verify \Cref{conj:2x3-formula} in the two cases needed for the following computation and obtain
		\begin{equation*} 
			\mathcal{R}\left((a_{1} b_{2})(a_{2} b_{3})(a_{3} b_{1})\right) 
			= 
			\mathcal{R}\left((a_{1} b_{2})(a_{2} b_{3})(a_{1} b_{3} - \Delta_{1, 3})\right) = \left(\frac{1}{6} - \frac{1}{6}\right) \Delta_{1, 2} \Delta_{1, 3} \Delta_{2, 3} = 0.
		\end{equation*}
		The above gives us the case $n = 0$ in \Cref{conj:odd-powers-in-kernel}. 
		In particular, it shows that a balanced monomial may be in the kernel of $\mathcal{R}$; 
		\Cref{thm:reynolds-operator-SL-2-by-2} tells us that this does not happen for $k[Y_{2 \times 2}]$, 
		where the monomials in the kernel are precisely the unbalanced ones. 
	\end{rem}

	\begin{rem}
		It is not true that the image of a monomial is again a monomial in the $\Delta_{i, j}$. One checks that
		\begin{equation*} 
			\mathcal{R}(a_{1} b_{2} a_{3} b_{4}) 
			= 
			\frac{1}{3} \Delta_{1, 2} \Delta_{3, 4} - \frac{1}{6} \Delta_{1, 3} \Delta_{2, 4}.
		\end{equation*}
		The expression on the right is not divisible by any $\Delta_{i, j}$ and thus cannot be expressed as a monomial in the $\{\Delta\}$.
	\end{rem}

	\begin{proof}[Proof of \Cref{thm:reynolds-operator-SL-2-by-2} (a)] 
		The map $\phi$ from \Cref{subsec:standard-computation} is given by
		\begin{equation*} 
			\begin{bmatrix}
			a_{1} & \cdots & a_{N} \\
			b_{1} & \cdots & b_{N} \\
		\end{bmatrix}
		\mapsto
		\begin{bmatrix}
			u_{11} & u_{12} \\
			u_{21} & u_{22}
		\end{bmatrix}
		\begin{bmatrix}
			a_{1} & \cdots & a_{N} \\
			b_{1} & \cdots & b_{N} \\
		\end{bmatrix}.
		\end{equation*}
		Thus, 
		\begin{align*} 
			\phi(a_{1}) &= a_{1} u_{11} + b_{1} u_{12}, \ \text{and} \\
			\phi(b_{2}) &= a_{2} u_{21} + b_{2} u_{22}.
		\end{align*}
		Because $\phi$ is a ring homomorphism, we have
		\begin{align*} 
			\phi\left((a_{1}b_{2})^{n}\right) 
			&= \sum_{i + j + k + \ell = n} 
			\binom{n}{i, j, k, \ell}
			\left(a_{1} a_{2} u_{11} u_{21}\right)^{i} 
			\left(a_{1} b_{2} u_{11} u_{22}\right)^{j}
			\left(a_{2} b_{1} u_{12} u_{21}\right)^{k}
			\left(b_{1} b_{2} u_{12} u_{22}\right)^{\ell}.
		\end{align*}

		\Cref{thm:integrating-over-SU2} tells us that if we integrate the above over $\SU_{2}(\mathbb{C})$, 
		the only terms that remain are those with $i = \ell$. 
		Integrating those terms, we get

		\begin{align*} 
			\mathcal{R}\left((a_{1}b_{2})^{n}\right) 
			&=
			\sum_{2i + j + k = n} 
			\binom{n}{i, j, k, i}
			(a_{1} b_{2})^{i + j} (a_{2} b_{1})^{i + k}
			(-1)^{i + k} \frac{(i + j)! (i + k)!}{(n + 1)!} \\
			&= \frac{1}{n + 1}(a_{1} b_{2} - a_{2} b_{1})^{n} 
			= \frac{\Delta_{1, 2}^{n}}{n + 1},
		\end{align*}

		where the penultimate equality uses \Cref{identity:x-plus-y-multinomial}, proving~\Cref{eq:R-SL-2-1}. 
		For~\Cref{eq:R-SL-2-2}, note that $a_{2} b_{1} = a_{1} b_{2} + \Delta_{2, 1}$. 
		Because $\mathcal{R}$ is $k[\{\Delta\}]$-linear and $\Delta_{1, 2} = -\Delta_{2, 1}$, we get
		\begin{align*} 
			\mathcal{R}((a_{1} b_{2})^{n} (a_{2} b_{1})^{m}) 
			&= \mathcal{R}\left(
			(a_{1} b_{2})^{n} (a_{1} b_{2} + \Delta_{2, 1})^{m}
			\right) \\
			&= \sum_{k = 0}^{m} \binom{m}{k} \Delta_{2, 1}^{m - k} 
			\mathcal{R}\left(
			(a_{1} b_{2})^{n + k}
			\right) \\
			&= \sum_{k = 0}^{m} \binom{m}{k} \Delta_{2, 1}^{m - k} \cdot \frac{\Delta_{1, 2}^{n + k}}{n + k + 1} \\
			&= \Delta_{1, 2}^{n} \Delta_{2, 1}^{m} \sum_{k = 0}^{m} \binom{m}{k} \frac{(-1)^{k}}{n + k + 1}.
		\end{align*}
		\Cref{identity:2} finishes the proof.
	\end{proof}

	\begin{proof}[Proof of \Cref{thm:row-unbalanced-in-kernel}]
		Consider the element 
		$\sigma = \left(
		\begin{smallmatrix}
		2 & 0 \\ 
		0 & 2^{-1} \\
		\end{smallmatrix}
		\right) \in \SL_{2}(k)$. 
		We have $\sigma(\mu) = 2^{m - n} \mu$ and thus, 
		the $\SL_{2}(k)$-equivariance of $\mathcal{R}$ implies that $\mathcal{R}(\mu) = 2^{m - n} \mathcal{R}(\mu)$.
		Because $m \neq n$, we get $\mathcal{R}(\mu) = 0$.
	\end{proof}

	\begin{proof}[Proof of \Cref{thm:reynolds-operator-SL-2-by-2} (b)]  
		We first prove the statement when $\mu$ is of the form $(a_{1} b_{1})^{m} (a_{1} b_{2})^{n}$ for some $m > 0$ and $n \ge 0$. 
		We have
		\begin{align*} 
			\phi(\mu) &= (a_{1}^{2} u^{\ast} + a_{1} b_{1} u^{\ast} + a_{1} b_{1} u^{\ast} + b_{1}^{2} u^{\ast})^{m} \cdot (a_{1} a_{2} u^{\ast} + a_{1} b_{2} u^{\ast} + a_{2} b_{1} u^{\ast} + b_{1} b_{2} u^{\ast})^{n},
		\end{align*}
		where each $u^{\ast}$ denotes some monomial in the $u_{ij}$. 
		Because $m > 0$, when we expand the above, each monomial that appears will be unbalanced in the sense that we may write
		\begin{equation*} 
			\phi(\mu) = \sum_{I} \alpha_{I} \mu_{I} u_{I},
		\end{equation*}
		where $\alpha_{I} \in k$, $\mu_{I} \in k[Y]$ is an unbalanced monomial, and $u_{I} \in k[U]$ is a monomial. 
		Integrating the above yields
		\begin{equation*} 
			\mathcal{R}(\mu) = \sum_{I} \left(\alpha_{I} \textstyle \int\! u_{I}\right) \mu_{I}. 
		\end{equation*}
		Now, note that $\mathcal{R}(\mu) \in k[\{\Delta\}] \subset k[\text{balanced monomials}]$, whereas each $\mu_{I}$ above is unbalanced. 
		Thus, the terms above must cancel out to give us $\mathcal{R}(\mu) = 0$.

		The $k[\{\Delta\}]$-linearity of $\mathcal{R}$ then implies the statement for $\mu$ of the form $(a_{1} b_{1})^{m} \nu$ with $m > 0$ and $\nu \in k[a_{1} b_{2}, a_{2} b_{1}]$. 
		By symmetry, the statement also holds for $\mu$ of the form $(a_{2} b_{2})^{m} \nu$. 
		\Cref{thm:row-unbalanced-in-kernel} takes care of unbalanced monomials not of the above form.
	\end{proof}

	\subsection{The Reynolds operator for \texorpdfstring{$\GL_{t}$}{GLt}}

	Let $t$, $n$, $m$ be positive integers,
	and $\mathcal{R} \colon k[X_{m \times t}, Y_{t \times n}] \to k[X, Y]^{\GL_{t}(k)}$ the Reynolds operator for the action~\ref{item:standard-dual-action}. 
	By \Cref{thm:classical-invariants}, we know the image of $\mathcal{R}$ to lie in $k[XY]$, the subalgebra of $k[X, Y]$ generated by the entries of $XY$. 
	Experimenting with the package \texttt{IntU}~\Cite{PuchalaMiszczak} suggests a formula similar to~\Cref{eq:R-SL-2-1}. 

	\begin{conj}
		For $t = 2$ and $n \ge 0$, we have
		\begin{equation*} 
			\mathcal{R}\left((x_{11} y_{11})^{n}\right) 
			= 
			\frac{1}{n + 1}(x_{11} y_{11} + x_{12} y_{21})^{n}
			=
			\frac{1}{n + 1}\left([XY]_{1, 1}\right)^{n}.
		\end{equation*}

		More generally, for $t \ge 1$ and $n \ge 0$, we have

		\begin{equation*} 
			\mathcal{R}\left((x_{11} y_{11})^{n}\right) 
			= 
			\binom{n + t - 1}{t - 1}^{-1}\left([XY]_{1, 1}\right)^{n}.
		\end{equation*}
	\end{conj}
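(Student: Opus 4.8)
The plan is to apply the algorithm of \Cref{subsec:standard-dual-computation} directly, exploiting the crucial observation that the monomial $(x_{11}y_{11})^{n}$ involves only the first row of $U$. For the $\GL_{t}$ action the compact group is $C = \UU_{t}(\mathbb{C})$, and the map $\phi$ sends $X \mapsto X\overline{U}^{\tr}$ and $Y \mapsto UY$. Reading off the relevant entries gives $\phi(x_{11}) = \sum_{k=1}^{t} x_{1k}\overline{u}_{1k}$ and $\phi(y_{11}) = \sum_{l=1}^{t} y_{l1} u_{1l}$, so that
\begin{equation*}
  \phi\left((x_{11}y_{11})^{n}\right) = \left(\sum_{l=1}^{t} y_{l1}u_{1l}\right)^{n}\left(\sum_{k=1}^{t} x_{1k}\overline{u}_{1k}\right)^{n}.
\end{equation*}
Only the first-row variables $u_{11},\ldots,u_{1t}$ and their conjugates occur. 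By \Cref{eq:reynolds-standard-dual-representation}, it therefore suffices to evaluate the single first-row integral $\int_{\UU_{t}(\mathbb{C})}\phi\left((x_{11}y_{11})^{n}\right)$.

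The one nonalgebraic ingredient I would need is the $\UU_{t}(\mathbb{C})$-analogue of the $\SU_{2}$ formula \Cref{thm:integrating-over-SU2}, but only for a single row. Since the first row of a Haar-random unitary matrix is distributed uniformly on the unit sphere $S^{2t-1}\subset\mathbb{C}^{t}$ (by invariance of the Haar measure under right translation, together with transitivity on the sphere), the squared moduli $(|u_{11}|^{2},\ldots,|u_{1t}|^{2})$ follow a Dirichlet distribution with all parameters equal to $1$. This yields the moment formula
\begin{equation*}
  \int_{\UU_{t}(\mathbb{C})} \prod_{j=1}^{t} u_{1j}^{p_{j}} \overline{u}_{1j}^{q_{j}} = \begin{cases} \dfrac{(t-1)!\,\prod_{j} p_{j}!}{(t-1+\sum_{j} p_{j})!} & \text{if } p_{j}=q_{j} \text{ for all } j, \\[3pt] 0 & \text{otherwise,}\end{cases}
\end{equation*}
which is \Cref{thm:integrating-over-SU2} specialised to one row, and is where all the analysis is concentrated. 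Note that, unlike the $\SU_{2}$ case, there is no sign, as $u_{1j}$ and $\overline{u}_{1j}$ are not tied by a determinant relation.

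With this in hand the computation collapses combinatorially. I would expand both factors of $\phi\left((x_{11}y_{11})^{n}\right)$ by the multinomial theorem, integrate term by term, and retain only the surviving diagonal terms with $p=q$. Writing $a_{l}=y_{l1}$ and $b_{k}=x_{1k}$, this reduces the claim to the identity
\begin{equation*}
  \int_{\UU_{t}(\mathbb{C})}\left(\sum_{l} a_{l}u_{1l}\right)^{n}\left(\sum_{k} b_{k}\overline{u}_{1k}\right)^{n} = \binom{n+t-1}{t-1}^{-1}\left(\sum_{j} a_{j}b_{j}\right)^{n}.
\end{equation*}
The surviving coefficients simplify via $\binom{n}{p_{1},\ldots,p_{t}}^{2}\prod_{j} p_{j}! = n!\binom{n}{p_{1},\ldots,p_{t}}$, after which the multinomial theorem resums $\sum_{p}\binom{n}{p_{1},\ldots,p_{t}}\prod_{j}(a_{j}b_{j})^{p_{j}}$ into $\left(\sum_{j} a_{j}b_{j}\right)^{n}$, while the prefactor $\tfrac{(t-1)!\,n!}{(n+t-1)!}=\binom{n+t-1}{t-1}^{-1}$ emerges. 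Finally $\sum_{j} a_{j}b_{j} = \sum_{j} x_{1j}y_{j1} = [XY]_{1,1}$, giving exactly the asserted formula; the case $t=2$ specialises the factor to $\tfrac{1}{n+1}$.

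The genuine obstacle is thus concentrated entirely in the first-row moment formula; everything downstream is elementary bookkeeping, and the key insight is that the pure power $(x_{11}y_{11})^{n}$ never sees more than one row, so no general unitary Weingarten calculus is required. As a cleaner conceptual alternative to the term-by-term expansion, one could prove the displayed integral identity by invariance: its left-hand side is a bihomogeneous polynomial in $a$ and $b$ that is invariant under the simultaneous substitution $(a,b)\mapsto(ga,\overline{g}\,b)$ for $g\in\UU_{t}(\mathbb{C})$, forcing it to be a scalar multiple of $\left(\sum_{j} a_{j}b_{j}\right)^{n}$, with the scalar pinned down by the single evaluation $a=b=e_{1}$, for which $\int_{\UU_{t}(\mathbb{C})}|u_{11}|^{2n} = \tfrac{(t-1)!\,n!}{(n+t-1)!}$. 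I expect the expansion route to be the most direct to write out, with the invariance route explaining conceptually why the answer depends only on $[XY]_{1,1}$.
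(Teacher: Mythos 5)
The paper offers no proof of this statement: it is explicitly labelled a conjecture, supported only by experiments with the \texttt{IntU} package. So there is nothing to compare against; instead I have checked your argument on its own terms, and it is correct --- you have in fact proved the conjecture. The reduction is sound: since $\phi(x_{11}) = \sum_{k} x_{1k}\overline{u}_{1k}$ and $\phi(y_{11}) = \sum_{l} y_{l1}u_{1l}$ involve only the first row of $U$, the algorithm of \Cref{subsec:standard-dual-computation} reduces everything to first-row moments of Haar-random $U \in \UU_{t}(\mathbb{C})$. Your moment formula is right: right-invariance of the Haar measure under diagonal unitaries kills all terms with $p \neq q$, the first row is uniform on $S^{2t-1}$ by right-invariance plus transitivity, and the squared moduli are then Dirichlet$(1,\ldots,1)$, giving $\int \prod_{j}\md{u_{1j}}^{2p_{j}} = (t-1)!\,\prod_{j}p_{j}!/(n+t-1)!$ when $\sum_{j}p_{j}=n$. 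The combinatorial collapse $\binom{n}{p}^{2}\prod_{j}p_{j}! = n!\binom{n}{p}$ followed by the multinomial theorem is correct, the prefactor is $\binom{n+t-1}{t-1}^{-1}$, and the $t=2$ case checks against \Cref{thm:integrating-over-SU2} (the first rows of Haar $\SU_{2}$ and $\UU_{2}$ have the same law). The passage from $\mathbb{C}$ back to arbitrary $k$ of characteristic zero is covered by \Cref{eq:R-k-mu-R-C-mu} since $(x_{11}y_{11})^{n}$ is a monomial.

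Two small suggestions if this is to be written up in the paper's style. First, the first-row moment formula is the only analytic input and deserves a short self-contained proof in the appendix (parallel to \Cref{thm:integrating-over-SU2-appendix}); your Dirichlet argument is fine, but the paper's other integrals are derived from explicit parametrisations or from \Cref{identity:beta-integral}, and the $t$-variable Dirichlet moment is exactly the iterated beta integral, so it fits naturally there. Second, your ``invariance route'' is a genuinely nicer way to see why only $[XY]_{1,1}$ can appear: the integral is a bidegree-$(n,n)$ polynomial in $(a,b)$ invariant under $\UU_{t}$, hence by Zariski density and \Cref{thm:classical-invariants}(a) with $m=n=1$ it is a scalar multiple of $(\sum_{j}a_{j}b_{j})^{n}$, and the scalar is the single moment $\int\md{u_{11}}^{2n}$. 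That version separates the structural statement from the normalisation and is worth recording alongside the direct expansion.
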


\section{Comparison with positive characteristic} \label{sec:positive-characteristic}
	
	The classical groups $\GL$, $\SL$, $\OO$, $\Sp$ are typically 
	\emph{not} linearly reductive in positive characteristic. 
	Thus, there is no guarantee of the existence of splittings that are linear over the fixed subring. 
	In fact, the following theorem tells us that this is essentially never the case.

	\begin{thm}[{\Cite[Theorem 1.1]{HochsterJeffriesPandeySingh}}] \label{thm:HJPS}
		Let $k$ be a field of characteristic $p > 0$. 
		Fix positive integers $m$, $n$, and $t$, and let $R \subset S$ denote one of the following inclusions:
		\begin{enumerate}[label=(\alph*)]
			\item $k[XY] \subset k[X_{m \times t}, Y_{t \times n}]$;
			\item $k[\{\Delta\}] \subset k[Y_{t \times n}]$ with $t \le n$, where $\{\Delta\}$ is the set of size $t$ minors of $Y$;
			\item $k[Y^{\tr} Y] \subset k[Y_{t \times n}]$;
			\item $k[Y^{\tr} \Omega Y] \subset k[Y_{2t \times n}]$.
		\end{enumerate}

		Then the inclusion $R \subset S$ splits $R$-linearly if and only if, in the respective cases,
		\begin{enumerate}[label=(\alph*)]
			\item $t = 1$ or $\min\{m, n\} \le t$;
			\item $t = 1$ or $t = n$;
			\item $t = 1$; $t = 2$ and $p$ is odd; $p = 2$ and $n \le (t + 1)/2$; or $p$ is odd and $n \le (t + 2)/2$;
			\item $n \le t + 1$.
		\end{enumerate}
	\end{thm}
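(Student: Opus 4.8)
The plan is to separate the two implications of each equivalence and attack them by quite different means. Every ring in sight is $\mathbb{N}$-graded with $R_0 = S_0 = k$, and any $R$-linear splitting of $R \hookrightarrow S$, if one exists, may be taken homogeneous; moreover, as recorded earlier, these invariant rings $R$ remain Cohen--Macaulay even in characteristic $p$. The first step is to reduce to local cohomology: if $R \hookrightarrow S$ splits $R$-linearly, then $R$ is a direct summand of $S$ as an $R$-module, so the additive functor $H^i_{\mathfrak{m}}(-)$ (with $\mathfrak{m}$ the homogeneous maximal ideal of $R$, and $S$ regarded as an $R$-module) sends the inclusion to a split injection
\begin{equation*}
	H^i_{\mathfrak{m}}(R) \to H^i_{\mathfrak{m}}(S) \qquad \text{for every } i.
\end{equation*}
Because $R$ is Cohen--Macaulay, $H^i_{\mathfrak{m}}(R) = 0$ unless $i = d \coloneqq \dim R$, so the only obstruction lives in the top module $H^d_{\mathfrak{m}}(R)$, dual to the canonical module of $R$. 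Thus non-splitting can be forced by producing a single nonzero class in $H^d_{\mathfrak{m}}(R)$ whose image in $H^d_{\mathfrak{m}}(S)$ vanishes.

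For the forward direction --- the parameter ranges where a splitting is asserted --- I would construct one by hand, exploiting how the situation degenerates at the boundary. When the effective group becomes linearly reductive in every characteristic (for instance $t = 1$ in (a), where $\GG = \mathbb{G}_m$ is a torus, or $t = 1$ in (b), where the group is trivial and $R = S$), a Reynolds-type retraction exists outright. In the remaining boundary cases the defining ideal of $R$ collapses --- the $(t+1)$-minors or $(2t+2)$-Pfaffians vanish identically, or the minors reduce to a single determinant --- and the resulting simple module structure of $S$ over $R$ lets one exhibit an explicit $R$-linear projection (or check injectivity of the top local cohomology map directly). I expect this direction to be routine, the only work being to recognize the correct degeneration in each of the four families.

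The substance is the non-splitting direction. For parameters outside the listed ranges I would build a distinguished class in $H^d_{\mathfrak{m}}(R)$ --- typically a product of minors or Pfaffians localized along a homogeneous regular sequence, representing a \v{C}ech cohomology class --- and then compute the precise threshold at which its image becomes a coboundary in $S$. This requires the known structure of the local cohomology of determinantal, symmetric-determinantal, and Pfaffian rings in characteristic $p$: their cohomological dimension, the internal degrees where nonzero cohomology is supported, and the socle-type generators of $H^d_{\mathfrak{m}}(R)$. The characteristic-$p$ hypothesis enters exactly here through the Frobenius action on these modules, which is the ingredient absent in characteristic zero --- where the classical groups are linearly reductive and a splitting always exists --- and which governs whether the tracked class survives or dies.

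The hard part will be pinning down the exact numerical boundaries, above all the characteristic-sensitive ones, such as $n \le (t+1)/2$ versus $n \le (t+2)/2$ according to the parity of $p$ in the orthogonal case (c). These thresholds come from a delicate count of the relevant cohomological degrees together with the Frobenius stability of the candidate obstruction class, and they demand a separate analysis for even and odd $p$. Accordingly, I would work out the symplectic case (d), with its single clean bound $n \le t+1$ and Gorenstein $R$, first as a model, then the generic determinantal case (a), and finally the symmetric case (c), where the parity of $p$ must be handled by hand.
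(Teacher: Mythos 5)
You should be aware at the outset that the paper does not prove \Cref{thm:HJPS} at all: the statement is quoted verbatim from \Cite[Theorem 1.1]{HochsterJeffriesPandeySingh}, and the ``proof'' in the source is simply that citation. So there is no internal argument to compare yours against; your outline can only be judged on its own terms and against the cited work. On those terms it points in a sensible direction --- the reduction ``$R$ a direct summand of $S$ implies $H^{\dim R}_{\mathfrak{m}}(R) \to H^{\dim R}_{\mathfrak{m}S}(S)$ is (split) injective'' is indeed the standard engine for non-splitting results of this kind, and explicit retractions in the degenerate parameter ranges are how the positive direction is handled --- but as written it is a research programme, not a proof. Every decisive step is deferred (``I expect this direction to be routine,'' ``the hard part will be pinning down the exact numerical boundaries''), and the boundaries \emph{are} the theorem.

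Two concrete gaps. First, in the split range: the observation that the defining ideal of $R$ collapses, so that $R$ is a polynomial ring (cases (a), (b)) or a hypersurface, does not by itself make $R$ a direct summand of $S$ --- a polynomial subring of a polynomial ring need not split off. One must actually exhibit the retraction, e.g.\ for (a) with $m \le t$ by specialising $X \mapsto [\, I_m \mid 0 \,]$ to get a $k$-algebra retraction $S \to k[XY]$; the boundary cases of (c) and (d) require their own constructions and are not mere degenerations. Second, and more seriously, in the non-split range your plan to ``produce a distinguished class in $H^d_{\mathfrak{m}}(R)$ and compute the threshold at which its image becomes a coboundary'' never identifies the actual mechanism: the clean obstruction is the inequality $\dim R > \operatorname{cd}(\mathfrak{m}S, S)$, where the cohomological dimension of the expansion of $\mathfrak{m}$ to $S$ is controlled in characteristic $p$ by vanishing theorems of Peskine--Szpiro type applied to the null-cone $S/\mathfrak{m}S$ --- this is precisely the ingredient that fails in characteristic zero and produces the dichotomy with the linearly reductive setting. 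Without naming this (or an equivalent Frobenius argument carried out in detail), the characteristic-sensitive thresholds in (c) --- $n \le (t+1)/2$ for $p = 2$ versus $n \le (t+2)/2$ for $p$ odd --- cannot be extracted from ``Frobenius stability of the candidate obstruction class'' in any checkable way. In short: right neighbourhood, but nothing here yet constitutes a proof of the stated equivalences.
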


	\begin{rem}
		The above theorem does not reference any group (action). 
		However, compare with \Cref{thm:classical-invariants} to see the connection for infinite fields of positive characteristic. 
	\end{rem}

	\begin{rem} \label{rem:primes-in-denominators}
		We describe a curious implication of \Cref{thm:HJPS}. We revisit formula~\Cref{eq:R-SL-2-1}: 
		\begin{equation*} 
			\mathcal{R}\left((a_{1} b_{2})^{n}\right) = \frac{1}{n + 1}\Delta_{1,2}^{n}.
		\end{equation*}
		Note the denominator `$n + 1$'. 
		This means that each prime number shows up as a factor of the denominator for some monomial.
		Said differently, $\mathcal{R}$ does not restrict to a map $\mathbb{Z}_{(p)}[Y] \to \mathbb{Z}_{(p)}[\{\Delta\}]$ for any prime $p > 0$,
		where $\mathbb{Z}_{(p)}$ is the subring of $\mathbb{Q}$ defined as
		\begin{equation*} 
			\mathbb{Z}_{(p)} \coloneqq \left\{\frac{a}{b} \in \mathbb{Q} : a, b \in \mathbb{Z} \ \text{with} \ p \nmid b\right\}.
		\end{equation*}
		\Cref{thm:HJPS} tells us that this must essentially always happen for any of the Reynolds operators described in the paper. 
		More generally, the above must happen for essentially any splitting that is linear over the subring. 

		Indeed, pick a situation in \Cref{thm:HJPS} where the inclusion does not split in positive characteristic. 
		For example, $\mathbb{F}_{p}[\{\Delta\}] \into \mathbb{F}_{p}[Y_{t \times n}]$ with $1 < t < n$. 
		As discussed earlier, the inclusion $\mathbb{Q}[\{\Delta\}] \into \mathbb{Q}[Y_{t \times n}]$ \emph{does} split. 
		Moreover, if we are only interested in splittings that are linear over the subring, 
		then there are typically more than one. 
		Let $\pi \colon \mathbb{Q}[Y_{t \times n}] \to \mathbb{Q}[\{\Delta\}]$ be any such $\mathbb{Q}[\{\Delta\}]$-linear splitting.
		The following must hold: given any prime $p > 0$, 
		there exists some monomial $\mu = \mu(p) \in \mathbb{Q}[Y]$ such that 
		when we express $\pi(\mu)$ as a polynomial in the $\{\Delta\}$ with rational coefficients, 
		then one of the coefficients has denominator divisible by $p$. 
		Indeed, if this were not the case for some prime $p$, 
		then $\pi$ would restrict to a splitting $\mathbb{Z}_{(p)}[Y] \to \mathbb{Z}_{(p)}[\{\Delta\}]$,
		and we could go mod $p$ to obtain an $\mathbb{F}_{p}[\{\Delta\}]$-linear splitting, 
		contradicting \Cref{thm:HJPS}.
	\end{rem}

\appendix

\section{Proof of the density theorem} \label{sec:proof-density}

	\begin{defn}
		For $X$ a topological space and $Y$ a subspace of $X$, 
		a \deff{retraction} of $X$ onto $Y$ is a continuous function $r \colon X \to Y$ 
		satisfying $r(y) = y$ for all $y \in Y \subset X$.
	\end{defn}

	\begin{lem} \label{lem:vanishing-on-product-infinite-sets}
		Let $k$ be a field, 
		and $S \subset k$ be an infinite subset. 
		If $f \in k[x_{1}, \ldots, x_{n}]$ is a polynomial vanishing on
		the product $S^{n} \subset \mathbb{A}_{k}^{n}$, 
		then $f$ is the zero polynomial. 
		Equivalently, $S^{n}$ is Zariski-dense in $\mathbb{A}_{k}^{n}$.
	\end{lem}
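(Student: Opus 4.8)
The plan is to prove this by induction on the number of variables $n$, using the fact that an infinite field has the property that a nonzero single-variable polynomial has only finitely many roots.

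The plan is to induct on the number of variables $n$. The base case $n = 1$ is the familiar fact that a nonzero polynomial over a field has no more roots than its degree: if $f \in k[x_1]$ is nonzero of degree $d$, it has at most $d$ roots, yet $f$ vanishes on the infinite set $S$, so $f$ must be the zero polynomial. This single-variable statement is the only place where the hypotheses that $k$ is a field and $S$ is infinite get used, and everything else is a purely formal reduction to it.

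For the inductive step, assume the result for $n - 1$ variables and take $f \in k[x_1, \ldots, x_n]$ vanishing on $S^n$. First I would regroup $f$ as a polynomial in $x_n$ with coefficients in $k[x_1, \ldots, x_{n-1}]$, writing
\begin{equation*}
	f = \sum_{i = 0}^{d} g_i(x_1, \ldots, x_{n-1})\, x_n^{i}.
\end{equation*}
Now fix an arbitrary point $(s_1, \ldots, s_{n-1}) \in S^{n-1}$ and specialise the first $n - 1$ variables. The resulting one-variable polynomial $f(s_1, \ldots, s_{n-1}, x_n) \in k[x_n]$ vanishes for every $x_n \in S$, so the base case forces all its coefficients to vanish, that is, $g_i(s_1, \ldots, s_{n-1}) = 0$ for each $i$. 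Since the point of $S^{n-1}$ was arbitrary, each coefficient $g_i$ vanishes on all of $S^{n-1}$, and the inductive hypothesis gives $g_i = 0$ for every $i$; hence $f = 0$, completing the induction.

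Finally I would record the equivalence with the density statement. The Zariski closure of $S^n$ is the vanishing locus $V(I(S^n))$ of the ideal $I(S^n)$ of polynomials vanishing on $S^n$, so $S^n$ is dense in $\mathbb{A}_k^n$ exactly when $I(S^n) = I(\mathbb{A}_k^n)$. Applying the lemma once with $S = k$ (which is infinite, as it contains the infinite subset $S$) shows $I(\mathbb{A}_k^n) = (0)$, and the general case of the lemma is precisely the assertion $I(S^n) = (0)$; the two vanishing-ideal statements therefore coincide, yielding density. There is no real obstacle here: the content is concentrated entirely in the one-variable base case, and the induction is a routine specialisation argument that I expect to be the least delicate part of the write-up.
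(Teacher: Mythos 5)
Your proof is correct and follows essentially the same approach as the paper: induction on the number of variables, reducing to the fact that a nonzero one-variable polynomial cannot vanish on an infinite subset of a field. The only cosmetic difference is that you argue directly (specialising at every point of $S^{n-1}$ and applying the inductive hypothesis to each coefficient), whereas the paper argues contrapositively via the leading coefficient; the content is identical.
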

	\begin{proof}
		We prove the statement by induction on $n$. 
		It is clear for $n = 1$. 
		Assume $n > 1$ and suppose $f$ is nonzero. 
		Write
		$f = f_{0} + f_{1} x_{n} + \cdots + f_{d} x_{n}^{d}$ 
		with $d \ge 0$, 
		$f_{d} \neq 0$,
		and $f_{i} \in k[x_{1}, \ldots, x_{n - 1}]$. 
		By induction, there exists 
		$\mathbf{s} = (s_{1}, \ldots, s_{n - 1}) \in S^{n - 1}$ with 
		$f_{d}(\mathbf{s}) \neq 0$. 
		Then, $f(\mathbf{s}, x_{n})$ is a nonzero polynomial in one variable, and this finishes the proof. 
	\end{proof}

	\begin{lem} \label{lem:dense-intersection-dense}
		Let $X$ be a topological space, 
		$Z \subset X$ a dense subspace, 
		and $Y \subset X$ a subspace such that there exists a 
		retraction $r \colon X \onto Y$ with $r(Z) \subset Z$. 
		Then, $Z \cap Y$ is dense in $Y$.
	\end{lem}
	\begin{proof} 
		Let $y \in Y$ be arbitrary. 
		As $Z$ is dense in $X$, 
		there exists 
		a net $\langle z_{\lambda} \rangle_{\lambda \in \Lambda}$ in $Z$
		with $z_{\lambda} \to y$. 
		In turn, $\langle r(z_{\lambda}) \rangle_{\lambda}$ is a net in
		$Z \cap Y$ converging to $y$.
	\end{proof}

	For the next few proofs, we define the function
	\begin{equation} \label{eq:retraction-GL-SL}
		\begin{aligned} 
			r \colon \GL_{n}(k) & \to \SL_{n}(k) \\
			U = 
			\begin{bmatrix}
				u_{11} & u_{12} & \cdots & u_{1n} \\
				u_{21} & u_{22} & \cdots & u_{2n} \\
				\vdots & \vdots & \ddots & \vdots \\
				u_{n1} & u_{n2} & \cdots & u_{nn}
			\end{bmatrix}
			&\mapsto 
			\begin{bmatrix}
				\frac{u_{11}}{\det U} & \frac{u_{12}}{\det U} & \cdots & \frac{u_{1n}}{\det U} \\
				u_{21} & u_{22} & \cdots & u_{2n} \\
				\vdots & \vdots & \ddots & \vdots \\
				u_{n1} & u_{n2} & \cdots & u_{nn}
			\end{bmatrix}.
		\end{aligned}
	\end{equation}
	That is, $r$ scales the first row of $U$ by $\frac{1}{\det U}$.

	\begin{lem} 
		For any field $k$, 
		the function $r$ defined by~\Cref{eq:retraction-GL-SL} 
		is a retraction of 
		$\GL_{n}(k)$ onto $\SL_{n}(k)$. 
	\end{lem}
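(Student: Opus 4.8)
The plan is to verify the three defining properties of a retraction in turn: that $r$ lands in $\SL_{n}(k)$, that it is continuous, and that it fixes $\SL_{n}(k)$ pointwise. Each of these reduces to an elementary observation, so the argument will be short.

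First I would check that $r$ is well-defined, that is, that $r(U) \in \SL_{n}(k)$ for every $U \in \GL_{n}(k)$. The matrix $r(U)$ is obtained from $U$ by scaling its first row by the scalar $\frac{1}{\det U}$, which makes sense precisely because $\det U$ is invertible on $\GL_{n}(k)$. Since multiplying a single row of a matrix by a scalar $\lambda$ multiplies its determinant by $\lambda$, I get $\det(r(U)) = \frac{1}{\det U} \cdot \det U = 1$; in particular $r(U)$ is invertible and hence lies in $\SL_{n}(k)$.

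Next I would argue continuity in the Zariski topology. Each entry of $r(U)$ is a rational function of the entries of $U$: the first-row entries are $\frac{u_{1j}}{\det U}$, while the remaining entries are the coordinate functions $u_{ij}$ themselves. As $\det U$ is a polynomial that is nowhere zero on $\GL_{n}(k)$, these rational functions are defined on all of $\GL_{n}(k)$, so $r$ is continuous by the same reasoning that makes the product and inversion maps continuous.

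Finally, the retraction property is immediate: if $M \in \SL_{n}(k)$, then $\det M = 1$, so the scaling factor $\frac{1}{\det M}$ equals $1$ and therefore $r(M) = M$. The only substantive ingredient is the behaviour of the determinant under row scaling, which is standard linear algebra, so I do not anticipate any genuine obstacle in this proof.
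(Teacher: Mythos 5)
Your proposal is correct and follows essentially the same route as the paper: multilinearity of the determinant under row scaling gives $\det(r(U))=1$, Zariski continuity follows because the entries of $r(U)$ are rational functions with nowhere-vanishing denominator, and the retraction property is immediate since $\det M=1$ makes the scaling trivial. No gaps.
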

	\begin{proof} 
		The multilinearity of $\det$ implies 
		that $\det(r(U)) = 1$ for all $U \in \GL_{n}(\mathbb{C})$, 
		that is, 
		$r$ indeed takes values in $\SL_{n}(k)$. 
		The function $r$ is continuous in the Zariski topology 
		because it is given by rational functions.
		It is clear that the restriction of $r$ to $\SL_{n}(k)$ is the identity.
	\end{proof}

	\begin{prop} \label{prop:U-GL-dense}
		For all $n \ge 1$, the subgroup 
		$\UU_{n}(\mathbb{C})$ is Zariski-dense in $\GL_{n}(\mathbb{C})$.
	\end{prop}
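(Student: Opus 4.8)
The plan is to show that any polynomial $f \in \mathbb{C}[u_{11}, \ldots, u_{nn}]$ vanishing on $\UU_{n}(\mathbb{C})$ must already vanish on all of $\GL_{n}(\mathbb{C})$, which is exactly Zariski-density. The natural route is a dimension/analyticity argument built from the relationship between the unitary group and the self-adjoint (Hermitian) matrices via the exponential map, or alternatively a more algebraic argument exploiting the fact that $\UU_{n}(\mathbb{C})$ is a \emph{real} form of $\GL_{n}(\mathbb{C})$ of full real dimension $n^2$. I would take the latter, more elementary approach, since it avoids invoking Lie theory.

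First I would observe the key algebraic fact: a matrix of the form $I + iH$, where $H$ is a real symmetric matrix and $i = \sqrt{-1}$, is a natural source of non-unitary directions, but the cleaner statement to use is that for any Hermitian matrix $H$, the Cayley-type transform or the exponential $\exp(iH)$ lands in $\UU_{n}(\mathbb{C})$. Concretely, if $H = H^{\ast}$, then $U \coloneqq \exp(iH)$ satisfies $U U^{\ast} = \exp(iH)\exp(-iH) = I_n$, so $U \in \UU_{n}(\mathbb{C})$. The Hermitian matrices form a \emph{real} vector space of dimension $n^2$, and the map $H \mapsto \exp(iH)$ is real-analytic with derivative at $H = 0$ equal to multiplication by $i$, hence a real-analytic local diffeomorphism onto a neighborhood of $I_n$ inside $\UU_{n}(\mathbb{C})$.

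Next I would suppose $f \in \mathbb{C}[u_{ij}]$ vanishes identically on $\UU_{n}(\mathbb{C})$ and derive that $f = 0$. Composing with the parametrization, the real-analytic function $H \mapsto f(\exp(iH))$ vanishes on all Hermitian $H$. Because the Hermitian matrices span $\mathbb{C}^{n^2}$ as a complex vector space (every complex matrix is uniquely $A + iB$ with $A, B$ Hermitian), the complexification of this parametrization has full rank, so the image of $\exp(i \cdot)$ is \emph{not} contained in any proper complex-algebraic subvariety of $\GL_{n}(\mathbb{C})$. Equivalently, since $f \circ \exp(i\cdot)$ vanishes on a real $n^2$-dimensional submanifold whose tangent space at $I_n$ spans $\mathbb{C}^{n^2}$ over $\mathbb{C}$, and $f$ is holomorphic, the identity theorem for holomorphic functions forces $f$ to vanish on a full complex neighborhood of $I_n$, and hence on the irreducible variety $\GL_{n}(\mathbb{C})$. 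This is what \Cref{lem:vanishing-on-product-infinite-sets} does for products of infinite sets, and the same philosophy—a Zariski-dense parametrizing domain—applies here.

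\textbf{The main obstacle} I expect is making the ``spanning the tangent space over $\mathbb{C}$'' step fully rigorous: one must argue carefully that real-analytic vanishing on a totally real submanifold of full real dimension propagates to holomorphic vanishing. The cleanest way is to pick a curve or, better, to use that the real and imaginary directions together at $I_n$ give $n^2$ complex-linearly-independent tangent vectors, and then invoke the identity theorem in several complex variables. A slicker alternative that sidesteps analysis entirely is to use the retraction $r$ of \Cref{eq:retraction-GL-SL} together with the Cayley transform to reduce to a product-of-intervals density statement as in \Cref{lem:dense-intersection-dense}; I suspect the paper follows this more elementary combinatorial-geometric route, applying \Cref{lem:vanishing-on-product-infinite-sets} to a suitable rational parametrization of a dense subset of $\UU_n(\mathbb{C})$, so I would look to set up such a parametrization (e.g.\ via a product of elementary unitary rotations with free angle parameters) rather than lean on the holomorphic identity theorem.
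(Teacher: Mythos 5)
Your argument is correct, but it is a genuinely different proof from the one in the paper. You run the classical ``unitarian trick'': compose $f$ with the entire map $A \mapsto \exp(iA)$ on $M_{n}(\mathbb{C})$, observe that the resulting holomorphic function vanishes on the Hermitian matrices --- a totally real subspace of real dimension $n^{2}$ that spans $M_{n}(\mathbb{C})$ over $\mathbb{C}$ --- and invoke the identity theorem in several variables to conclude that $f$ vanishes on a Euclidean neighborhood of $I_{n}$, hence identically on the irreducible $\GL_{n}(\mathbb{C})$. The one step you flag as delicate (propagation of vanishing from a maximal totally real submanifold to holomorphic vanishing) is a standard fact, and your suggested route through the complexified parametrization makes it rigorous, so the proof goes through. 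The paper instead stays entirely within algebra and point-set topology: it notes that diagonal matrices with entries in $\mathbb{S}^{1}$ are unitary, applies \Cref{lem:vanishing-on-product-infinite-sets} to conclude that the Zariski closure of $\UU_{n}(\mathbb{C})$ contains the whole diagonal torus, uses that the closure of a subgroup of a topological group is a subgroup, and finishes with the singular value decomposition $UDV$. Your guess in the final paragraph about a rational parametrization fed into \Cref{lem:vanishing-on-product-infinite-sets} is close in spirit but not what the paper does; the paper only needs the lemma for the diagonal slice and lets the group structure do the rest. The trade-off: your approach needs the several-variables identity theorem but generalizes immediately to any compact real form of a connected complex reductive group, whereas the paper's argument avoids all complex analysis at the cost of being tailored to $\GL_{n}$ via the $UDV$ decomposition.
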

	\begin{proof} 
		Let $C$ be the Zariski closure of $\UU_{n}(\mathbb{C})$ in $\GL_{n}(\mathbb{C})$. 
		Write $C = V(\mathfrak{a}) \cap \GL_{n}(\mathbb{C})$ for $\mathfrak{a}$ an ideal. 
		Let $f \in \mathfrak{a}$. 
		Note that if $z_{1}, \ldots, z_{n}$ are elements of the unit circle $\mathbb{S}^{1}$, 
		then $\diag(z_{1}, \ldots, z_{n})$ is an element of $\UU_{n}$. 
		Thus, $f$ vanishes on all diagonal matrices with entries coming from $\mathbb{S}^{1}$. 
		By \Cref{lem:vanishing-on-product-infinite-sets}, we see that $f$ must vanish on \emph{all} diagonal matrices. 
		Thus, $C$ contains all invertible diagonal matrices.

		Because $\GL_{n}(\mathbb{C})$ is a topological group in the Zariski topology, 
		and $\UU_{n}(\mathbb{C})$ is a subgroup, 
		it follows that $C$ is a subgroup. 
		As every invertible matrix can be decomposed as $U D V$ with 
		$U, V \in \UU_{n}(\mathbb{C})$ and $D$ invertible diagonal, 
		we are done.
	\end{proof}

	\begin{prop} \label{prop:SU-SL-dense}
		For all $n \ge 1$, the subgroup 
		$\SU_{n}(\mathbb{C})$ is Zariski-dense in $\SL_{n}(\mathbb{C})$.
	\end{prop}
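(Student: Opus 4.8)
The plan is to deduce this from \Cref{prop:U-GL-dense} together with the general density criterion \Cref{lem:dense-intersection-dense}, using the explicit retraction $r \colon \GL_{n}(\mathbb{C}) \onto \SL_{n}(\mathbb{C})$ from~\Cref{eq:retraction-GL-SL}. I would apply \Cref{lem:dense-intersection-dense} with $X = \GL_{n}(\mathbb{C})$, the dense subspace $Z = \UU_{n}(\mathbb{C})$ (dense by \Cref{prop:U-GL-dense}), and the subspace $Y = \SL_{n}(\mathbb{C})$. Since $\UU_{n}(\mathbb{C}) \cap \SL_{n}(\mathbb{C}) = \SU_{n}(\mathbb{C})$ by definition, the lemma would immediately yield that $\SU_{n}(\mathbb{C})$ is dense in $\SL_{n}(\mathbb{C})$, provided the hypothesis $r(Z) \subset Z$ holds.

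The one thing to verify is therefore that $r$ maps $\UU_{n}(\mathbb{C})$ into $\UU_{n}(\mathbb{C})$, and this is the heart of the argument. First, if $U$ is unitary, then $\md{\det U} = 1$, so the scalar $\lambda \coloneqq \frac{1}{\det U}$ by which $r$ rescales the first row has modulus one. Second, scaling a single row of a unitary matrix by a unit-modulus scalar preserves unitarity: the rows of a unitary matrix form an orthonormal basis, and multiplying one row by $\lambda$ with $\md{\lambda} = 1$ keeps that row a unit vector while scaling its Hermitian inner products with the remaining rows (all of which are zero) by $\lambda$ or $\overline{\lambda}$, hence leaving them zero. Thus $r(U)$ remains unitary, and since $\det(r(U)) = 1$ by construction, we in fact have $r(U) \in \SU_{n}(\mathbb{C}) \subset \UU_{n}(\mathbb{C})$, as required.

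With $r(\UU_{n}(\mathbb{C})) \subset \UU_{n}(\mathbb{C})$ established and $\UU_{n}(\mathbb{C})$ dense in $\GL_{n}(\mathbb{C})$, all hypotheses of \Cref{lem:dense-intersection-dense} are met and the conclusion follows. I do not expect any serious obstacle: the structure mirrors the decomposition argument used in \Cref{prop:U-GL-dense}, and the only genuine computation is the elementary fact that rescaling one row by a unit-modulus scalar preserves membership in the unitary group. The single point worth stating carefully is the modulus condition $\md{\det U} = 1$, which is exactly what guarantees the retraction does not leave the unitary group.
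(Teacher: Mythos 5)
Your proposal is correct and follows essentially the same route as the paper: apply \Cref{lem:dense-intersection-dense} with $X = \GL_{n}(\mathbb{C})$, $Z = \UU_{n}(\mathbb{C})$, $Y = \SL_{n}(\mathbb{C})$, and the retraction $r$ of~\Cref{eq:retraction-GL-SL}, checking $r(\UU_{n}(\mathbb{C})) \subset \UU_{n}(\mathbb{C})$ via $\md{\det U} = 1$ and orthonormality of the rows. The paper's proof is the same argument, stated slightly more tersely.
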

	\begin{proof} 
		We use \Cref{lem:dense-intersection-dense} with 
		$X = \GL_{n}(\mathbb{C})$, 
		$Z = \UU_{n}(\mathbb{C})$, 
		$Y = \SL_{n}(\mathbb{C})$, 
		and $r$ given by~\Cref{eq:retraction-GL-SL}.
		The density of $Z$ then follows from \Cref{prop:U-GL-dense}. 
		All that is left to be shown is that 
		$r(\UU_{n}(\mathbb{C})) \subset \UU_{n}(\mathbb{C})$. 
		To this end,
		note that a matrix is unitary 
		if and only if 
		its rows form an orthonormal basis. 
		If $U \in \UU_{n}(\mathbb{C})$, 
		then $\det(U) \in \mathbb{S}^{1}$
		and thus, the rows of $r(U)$ continue to be orthonormal.
	\end{proof}	

	\begin{thm} \label{thm:G-Q-dense-in-G-k}
		Let $k$ be a field of characteristic zero. 
		For each of the following inclusions, the subgroup is Zariski-dense in the larger group.
		\begin{enumerate}[label=(\alph*)]
			\item $\GL_{n}(\mathbb{Q}) \subset \GL_{n}(k)$,
			\item $\SL_{n}(\mathbb{Q}) \subset \SL_{n}(k)$,
			\item $\OO_{n}(\mathbb{Q}) \subset \OO_{n}(k)$, and
			\item $\Sp_{2n}(\mathbb{Q}) \subset \Sp_{2n}(k)$.
		\end{enumerate}
	\end{thm}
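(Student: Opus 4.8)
The plan is to treat the four groups in two pairs: for $\GL_{n}$ and $\SL_{n}$ I would reuse the elementary machinery already in place, while for $\OO_{n}$ and $\Sp_{2n}$ I would parametrise the group birationally by its Lie algebra via the Cayley transform. Throughout, the only genuinely analytic input is \Cref{lem:vanishing-on-product-infinite-sets}, applied to the infinite subset $\mathbb{Q} \subset k$.

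For part (a), the starting point is that $M_{n}(\mathbb{Q}) = \mathbb{Q}^{n^{2}}$ is Zariski-dense in $M_{n}(k) = \mathbb{A}_{k}^{n^{2}}$ by \Cref{lem:vanishing-on-product-infinite-sets}. A closed subset of $\GL_{n}(k)$ is cut out by the vanishing of polynomials in the entries $x_{ij}$, since $\det$ is a unit on $\GL_{n}$; so it suffices to show that any $f \in k[x_{ij}]$ vanishing on $\GL_{n}(\mathbb{Q})$ is the zero polynomial. For such an $f$, the product $\det \cdot f$ vanishes on all of $M_{n}(\mathbb{Q})$ --- on singular rational matrices because $\det = 0$, and on invertible ones because $f = 0$ --- so $\det \cdot f = 0$ by the density just noted, whence $f = 0$. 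For part (b), I would apply \Cref{lem:dense-intersection-dense} with $X = \GL_{n}(k)$, $Z = \GL_{n}(\mathbb{Q})$, $Y = \SL_{n}(k)$, and $r$ the retraction of~\Cref{eq:retraction-GL-SL}, exactly as in the proof of \Cref{prop:SU-SL-dense}. The density of $Z$ in $X$ is part (a), and $r(Z) \subset Z$ holds because $r$ scales the first row by $1/\det$, hence preserves matrices with rational entries and lands in $\SL_{n}(\mathbb{Q}) \subset \GL_{n}(\mathbb{Q})$. The lemma then yields that $Z \cap Y = \SL_{n}(\mathbb{Q})$ is dense in $\SL_{n}(k)$.

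For parts (c) and (d), let $J = I_{n}$ in the orthogonal case and $J = \Omega$ in the symplectic case, so that the group is $G = \{M : M^{\tr} J M = J\}$ with linear Lie algebra $\mathfrak{g} = \{A : A^{\tr} J + J A = 0\}$. The Cayley transform $C(A) = (I - A)(I + A)^{-1}$, defined wherever $\det(I + A) \neq 0$, is a rational map from $\mathfrak{g}$ to $G$ defined over $\mathbb{Q}$, with rational inverse $M \mapsto (I - M)(I + M)^{-1}$; a direct computation using $A^{\tr} J = -J A$ shows $C(A)^{\tr} J\, C(A) = J$, and in the orthogonal case that $\det C(A) = 1$. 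Since $\mathfrak{g}$ is a $k$-linear subspace of $M_{n}$, its rational points are dense by \Cref{lem:vanishing-on-product-infinite-sets}, and they remain dense in the open domain $U = \{A \in \mathfrak{g} : \det(I + A) \neq 0\}$. As $C$ restricts to an isomorphism of $U$ onto its image $W \subset G$ defined over $\mathbb{Q}$, I conclude that $W(\mathbb{Q}) = C(U(\mathbb{Q}))$ is dense in $W(k)$. It remains to pass from this big cell to the whole group. For $\Sp_{2n}$, which is connected, $W$ is a nonempty open subset of an irreducible variety, so $W(k)$ is dense in $\Sp_{2n}(k)$, giving (d). For $\OO_{n}$, the transform lands in $\operatorname{SO}_{n}$, so the argument shows $\operatorname{SO}_{n}(\mathbb{Q})$ is dense in $\operatorname{SO}_{n}(k)$; to reach the other coset I would fix the rational improper matrix $P = \diag(-1, 1, \ldots, 1) \in \OO_{n}(\mathbb{Q})$, whose left-multiplication is a $\mathbb{Q}$-rational homeomorphism carrying $\operatorname{SO}_{n}$ onto $\{M : \det M = -1\}$ and rational points onto rational points. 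The closure of $\OO_{n}(\mathbb{Q})$ then contains both $\operatorname{SO}_{n}(k)$ and this coset, hence all of $\OO_{n}(k)$, proving (c).

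The step I expect to be the main obstacle is the passage from density in $W(k)$ to density in $\operatorname{SO}_{n}(k)$ and $\Sp_{2n}(k)$: this uses that these groups are geometrically irreducible, so that the nonempty open subset $W$ --- equivalently, the complement of the proper closed locus $\{\det(I + M) = 0\}$ --- is dense in the $k$-points. This is the one place where I would invoke the standard structure theory of connected classical groups rather than the elementary density lemma; everything else reduces to \Cref{lem:vanishing-on-product-infinite-sets,lem:dense-intersection-dense} together with explicit $\mathbb{Q}$-rational maps.
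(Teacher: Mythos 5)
Your argument is correct, and for parts (a) and (b) it coincides with the paper's proof (the paper gets (a) even more directly, as ``dense set intersected with the open set $\GL_{n}(k)$,'' while your $\det\cdot f$ trick is a harmless variant; (b) is verbatim the same use of \Cref{lem:dense-intersection-dense} with the retraction~\Cref{eq:retraction-GL-SL}). For (c) and (d), however, you take a genuinely different route. The paper exploits explicit generating sets: for $\OO_{n}$ it uses the Cartan--Dieudonn\'e theorem to reduce to the density of the $\mathbb{Q}$-reflections $I - 2uu^{\tr}/(u^{\tr}u)$ inside all reflections (the image of a dense open subset of $\mathbb{A}^{n}$ under a continuous map), and for $\Sp_{2n}$ it uses Dieudonn\'e's generators $\left(\begin{smallmatrix} A & O \\ O & (A^{\tr})^{-1}\end{smallmatrix}\right)$, $\left(\begin{smallmatrix} I & B \\ O & I\end{smallmatrix}\right)$, $\Omega$; in both cases the key point is that the Zariski closure of a subgroup is a subgroup, so density need only be checked on generators, and no connectedness statement is ever invoked. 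You instead use the Cayley transform $A \mapsto (I-A)(I+A)^{-1}$ to parametrise a big cell of the group by its Lie algebra, which is a $\mathbb{Q}$-linear space whose rational points are dense by \Cref{lem:vanishing-on-product-infinite-sets}; this handles $\OO_{n}$ and $\Sp_{2n}$ uniformly (and would work for any group admitting such a $\mathbb{Q}$-rational cell), at the price of importing the geometric irreducibility of $\operatorname{SO}_{n}$ and $\Sp_{2n}$, which you correctly identify as the one non-elementary input. That step does close up --- density of the rational points of the cell in the $\overline{k}$-points of the cell, plus irreducibility over $\overline{k}$, gives density in $G(\overline{k})$ and a fortiori in the subspace $G(k)$ --- but you should state it in that order, since ``a nonempty open subset is dense in the $k$-points'' is not automatic for a variety over a non-closed field and here follows only because the Cayley cell itself shows the $\mathbb{Q}$-points are Zariski-dense in $G(\overline{k})$. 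The coset argument with $P = \diag(-1,1,\ldots,1)$ for the two components of $\OO_{n}$ is fine.
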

	\begin{proof}
		General linear group: 
		By \Cref{lem:vanishing-on-product-infinite-sets}, 
		the subspace $\mathbb{Q}^{n^{2}}$ is dense in $\mathbb{A}_{k}^{n^{2}}$. 
		Intersecting with the open set $\GL_{n}(k)$ gives us (a).

		Special linear group: (b) then follows by use of \Cref{lem:dense-intersection-dense} 
		with $X = \GL_{n}(k)$, 
		$Y = \SL_{n}(k)$,
		$Z = \GL_{n}(\mathbb{Q})$, and 
		$r$ given by~\Cref{eq:retraction-GL-SL}.

		Orthogonal group: We note that the orthogonal group $\OO_{n}(k)$ is generated by the set of reflections
		\begin{equation*} 
			R(k) \coloneqq \left\{I - \frac{2 u u^{\tr}}{u^{\tr} u} : \text{$u \in k^{n}$ with $u^{\tr} u \neq 0$}\right\},
		\end{equation*}
		in fact the Cartan--Dieudonn\'e theorem states that every orthogonal matrix is a product of at most $n$ such reflections, see~\Cite{DieudonneGroupesClassiquesOG, ScherkOrthogonal}. 
		Because the closure of $\OO_{n}(\mathbb{Q})$ must be a subgroup of $\OO_{n}(k)$, it suffices to show that $R(\mathbb{Q})$ is dense in $R(k)$.
		To this end, note that $I(k) \coloneqq \{u \in k^{n} \colon u^{\tr} u \neq 0\}$ is an open subset of $\mathbb{A}_{k}^{n}$ and thus intersecting with the dense set $\mathbb{Q}^{n}$, 
		we get that $I(\mathbb{Q})$ is dense in $I(k)$. 
		Now, $R(k)$ is the image of $I(k)$ under the continuous map $u \mapsto I - \frac{2 u u^{\tr}}{u^{\tr} u}$ and hence $R(\mathbb{Q})$ is dense in $R(k)$.

		Symplectic group: (d) follows similarly by using the fact that the symplectic group $\Sp_{2n}(k)$ is generated by 
		\begin{equation*} 
			\begin{bmatrix}
				A & O \\
				O & (A^{\tr})^{-1} \\
			\end{bmatrix}, 
			\qquad
			\begin{bmatrix}
				I & B \\
				O & I \\
			\end{bmatrix},
			\quad
			\text{and}
			\quad
			\begin{bmatrix}
				O & I \\
				-I & O \\
			\end{bmatrix},
		\end{equation*}
		where $A$ varies over $\SL_{n}(k)$, 
		and $B$ over all symmetric $n \times n$ matrices. 
		This description is originally due to \Citeauthor{DieudonneGenerators}~\Cite{DieudonneGenerators} and can also be found in~\Cite[\S2.2]{OMearaSymplectic}.
	\end{proof}

\section{Multinomial coefficient and integration identities} \label{sec:identities}

	\begin{identity} \label{identity:beta-integral}
		For integers $a, b \ge 0$, we have
		\begin{equation*}
			\int_{0}^{1} t^{a} (1 - t)^{b} \,{\mathrm{d}}t = \frac{a! b!}{(a + b + 1)!}.
		\end{equation*}
	\end{identity}
	\begin{proof}
		The formula is readily verified if $b = 0$. 
		For $a \ge 0$ and $b > 0$, integration by parts yields
		\begin{equation*} 
			\int_{0}^{1} t^{a} (1 - t)^{b} \,{\mathrm{d}}t = \frac{b}{a + 1} \int_{0}^{1} t^{a + 1} (1 - t)^{b - 1} \,{\mathrm{d}}t.
		\end{equation*}
		Repeated application of the above gives the desired formula.
	\end{proof}

	\begin{identity} \label{identity:x-plus-y-multinomial}
		Let $n \ge 0$ be an integer. One has the identity
		\begin{equation*} 
			\frac{(x + y)^{n}}{n + 1} 
			= 
			\sum_{2i + j + k = n} 
			\binom{n}{i, i, j, k} 
			\frac{(i + j)! (i + k)!}{(n + 1)!} 
			x^{i + j} y^{i + k},
		\end{equation*}
		where, explicitly, the sum is taken over all triples $(i, j, k) \in \mathbb{N}^{3}$ satisfying $2i + j + k = n$.
	\end{identity}
	\begin{proof} 
		Note that
		\begin{equation*} 
			\binom{n}{i, i, j, k} \frac{(i + j)! (i + k)!}{(n + 1)!} = \frac{n!}{i! i! j! k!} \frac{(i + j)! (i + k)!}{(n + 1)!} = \frac{1}{n + 1} \binom{i + j}{i} \binom{i + k}{k}.
		\end{equation*}
		Thus, the identity of interest is equivalent to
		\begin{equation*} 
			(x + y)^{n} = 
			\sum_{2i + j + k = n} 
			\binom{i + j}{i} \binom{i + k}{k}
			x^{i + j} y^{i + k}.
		\end{equation*}
		Because both sides of the equation are homogeneous of degree $n$, 
		it suffices to verify that
		\begin{align} \label{eq:001} 
			(x + 1)^{n} = 
			\sum_{2i + j + k = n} 
			\binom{i + j}{i} \binom{i + k}{k}
			x^{i + j}.
			\tag{$\star$}
		\end{align}

		To prove the above identity, we need to show that the coefficient of $x^{a}$ is the same on both sides for each
		$0 \le a \le n$.
		The coefficient of $x^{a}$ on the right-hand-side of~\Cref{eq:001} is given by
		\begin{align*} 
			\sum_{\substack{2i + j + k = n \\ i + j = a}}
			\binom{i + j}{i} \binom{i + k}{k} 
			= \sum_{i + k = n - a} \binom{a}{i} \binom{i + k}{i}
			= \sum_{i} \binom{a}{i} \binom{n - a}{i}.
		\end{align*}

		Thus, it suffices to prove that
		\begin{align} \label{eq:002}
			\binom{n}{a} = \sum_{i} \binom{a}{i} \binom{n - a}{i}.
			\tag{$\dagger$}
		\end{align}

		To this end, note that
		\begin{equation*} 
			(1 + X)^{a} (1 + Y)^{n - a} = \sum_{i, j} \binom{a}{i} \binom{n - a}{j} X^{i} Y^{j}.
		\end{equation*}

		Substituting $Y = 1/X$ gives
		\begin{equation*} 
			(1 + X)^{a} \left(1 + \frac{1}{X}\right)^{n - a} 
			= 
			\sum_{i, j} \binom{a}{i} \binom{n - a}{j} X^{i - j}.
		\end{equation*}

		Thus,
		\begin{equation*} 
			\frac{1}{X^{n - a}} (1 + X)^{n}
			= 
			\sum_{i, j} \binom{a}{i} \binom{n - a}{j} X^{i - j}.
		\end{equation*}

		Comparing the coefficient of $X^{0}$ on both sides gives us~\Cref{eq:002}.
	\end{proof}

	\begin{identity} \label{identity:2}
		For integers $m, n \ge 0$, one has the identity
		\begin{equation*} 
			\sum_{k = 0}^{n} \binom{n}{k} \frac{(-1)^{k}}{m + k + 1} = \frac{m! n!}{(m + n + 1)!}.
		\end{equation*}
	\end{identity}
	\begin{proof} 
		We note
		\begin{align*} 
			\sum_{k = 0}^{n} \binom{n}{k} \frac{(-1)^{k}}{m + k + 1} &= \sum_{k = 0}^{n} \binom{n}{k} (-1)^{k} \int_{0}^{1} t^{m + k} \,{\mathrm{d}}t \\
			&= \int_{0}^{1} t^{m} \cdot \sum_{k = 0}^{n} \binom{n}{k} (-t)^{k} \,{\mathrm{d}}t \\
			&= \int_{0}^{1} t^{m} (1 - t)^{n} \,{\mathrm{d}}t \\
			&= \frac{m! n!}{(m + n + 1)!},
		\end{align*}
		where the last step uses \Cref{identity:beta-integral}.
	\end{proof}

	\begin{identity} \label{identity:integrate-cos-sin}
		For integers $a, b \ge 0$, we have
		\begin{equation*} 
			\int_{0}^{\pi/2} 
			\cos^{2a}(\theta) \sin^{2b}(\theta) \sin(2\theta)
			\,{\mathrm{d}}\theta 
			= \frac{a!b!}{(a + b + 1)!}.
		\end{equation*}
	\end{identity}
	\begin{proof} 
		The integrand can be rewritten as
		\begin{align*}
			\cos^{2a}(\theta) \sin^{2b}(\theta) \sin(2\theta) 
			&= 2 \cos^{2a + 1}(\theta) \sin^{2b + 1}(\theta) \\
			&= 2 (\cos^{2}(\theta))^{a} (\sin(\theta))^{2b + 1} \cos(\theta) \\
			&= 2 (1 - \sin^{2}(\theta))^{a} (\sin(\theta))^{2b + 1} \cos(\theta).
		\end{align*}

		The substitution $u = \sin(\theta)$ gives us
		\begin{align*} 
			\int_{0}^{\pi/2} 
			\cos^{2a}(\theta) \sin^{2b}(\theta) \sin(2\theta)
			\,{\mathrm{d}}\theta 
			&= 
			\int_{0}^{1} 2 (1 - u^{2})^{a} u^{2b + 1} \,{\mathrm{d}}u \\
			&= \int_{0}^{1} (1 - u^{2})^{a} (u^{2})^{b} (2u \,{\mathrm{d}}u) \\
			&= \int_{0}^{1} (1 - t)^{a} t^{b} \,{\mathrm{d}}t.
		\end{align*}

		The desired identity now follows from \Cref{identity:beta-integral}.
	\end{proof}

	\begin{identity} \label{thm:integrating-over-SU2-appendix}
		For nonnegative integers $a, b, c, d$, we have
		\begin{equation*} 
			\int_{\SU_{2}(\mathbb{C})} u_{11}^{a} u_{12}^{b} u_{21}^{c} u_{22}^{d} = 
			\begin{cases}
				(-1)^{b} \dfrac{a! b!}{(a + b + 1)!} & \text{if $a = d$ and $b = c$}, \\[3pt]
				0 & \text{else}.
			\end{cases}
		\end{equation*}
	\end{identity}
	\begin{proof}
		We use the formula for the Haar measure on $\SU_{2}(\mathbb{C})$ from~\Cite[Proposition 7.4.1]{FarautAnalysis}.
		Given a smooth function $f \colon \SU_{2}(\mathbb{C}) \to \mathbb{C}$, we have
		\begin{align*} 
			\int_{\SU_{2}(\mathbb{C})} f 
			&= 
			\frac{1}{2 \pi^{2}} 
			\int_{0}^{\pi/2} 
			\int_{0}^{\pi} 
			\int_{-\pi}^{\pi} 
			f\left(
			\begin{bmatrix}
				e^{\iota \psi} & \\
				& e^{-\iota \psi} \\
			\end{bmatrix}
			\begin{bmatrix}
				\cos(\theta) & \sin(\theta) \\
				-\sin(\theta) & \cos(\theta) \\
			\end{bmatrix}
			\begin{bmatrix}
				e^{\iota \varphi} & \\
				& e^{-\iota \varphi} \\
			\end{bmatrix}
			\right)
			\sin(2 \theta)
			\,{\mathrm{d}}\psi
			\,{\mathrm{d}}\varphi
			\,{\mathrm{d}}\theta \\[8pt]
			&= 
			\frac{1}{2 \pi^{2}} 
			\int_{0}^{\pi/2} 
			\int_{0}^{\pi} 
			\int_{-\pi}^{\pi} 
			f\left(
			\begin{bmatrix}
				e^{\iota (\psi + \varphi)} \cos(\theta) & e^{\iota (\psi - \varphi)} \sin(\theta) \\
				-e^{\iota (-\psi + \varphi)}\sin(\theta) & e^{\iota (-\psi - \varphi)} \cos(\theta) \\
			\end{bmatrix}
			\right)
			\sin(2 \theta)
			\,{\mathrm{d}}\psi
			\,{\mathrm{d}}\varphi
			\,{\mathrm{d}}\theta.
		\end{align*}
		Rewriting in terms of the above coordinates, we get
		\begin{equation*} 
			u_{11}^{a} u_{12}^{b} u_{21}^{c} u_{22}^{d} 
			= 
			(-1)^{c}
			\exp(\iota \psi(a + b - c - d)) 
			\exp(\iota \varphi(a - b + c - d)) 
			\cos^{a + d}(\theta)
			\sin^{b + c}(\theta).
		\end{equation*}
		We integrate using Fubini's theorem to obtain
		\begin{align*} 
			& 2\pi^{2} \int_{\SU_{2}(\mathbb{C})} u_{11}^{a} u_{12}^{b} u_{21}^{c} u_{22}^{d} \\
			&= (-1)^{c} 
			\cdot
			\int_{-\pi}^{\pi} \exp(\iota \psi(a + b - c - d)) \,{\mathrm{d}}\psi
			\cdot
			\int_{0}^{\pi} \exp(\iota \varphi(a - b + c - d)) \,{\mathrm{d}}\varphi
			\cdot 
			\int_{0}^{\pi/2} \cos^{a + d}(\theta) \sin^{b + c}(\theta) \sin(2\theta) \,{\mathrm{d}}\theta.
		\end{align*}
		For the first integral to be nonzero, 
		we must have $a + b - c - d = 0$. 
		This implies that $a - b + c - d$ is even and hence must be zero if the second integral is to be nonzero. 
		Solving these two equations simultaneously gives us $a = d$ and $b = c$. 
		Assume now that these two equations hold. 
		We then have
		\begin{align*} 
			& 2\pi^{2} \int_{\SU_{2}(\mathbb{C})} u_{11}^{a} u_{12}^{b} u_{21}^{c} u_{22}^{d} \\
			&= (-1)^{b} 
			\cdot
			\int_{-\pi}^{\pi} 1 \,{\mathrm{d}}\psi
			\cdot
			\int_{0}^{\pi} 1 \,{\mathrm{d}}\varphi
			\cdot 
			\int_{0}^{\pi/2} \cos^{2a}(\theta) \sin^{2b}(\theta) \sin(2\theta) \,{\mathrm{d}}\theta \\
			&= (-1)^{b} (2 \pi^{2}) \cdot \frac{a!b!}{(a + b + 1)!},
		\end{align*}
		where the last equality follows from \Cref{identity:integrate-cos-sin}.
	\end{proof}

\printbibliography
\end{document}